\theoremstyle{plain}
\newtheorem{theorem}{\protect Theorem}[section]
\newtheorem{lemma}[theorem]{\protect Lemma}
\newtheorem{remark}[theorem]{\protect Remark}
\newtheorem{example}[theorem]{\protect Examples}
\newcommand{\R}{\mathbb{R}}
\newcommand{\Px}{\mathbb{P}}
\newcommand{\Qx}{\mathbb{Q}}
\newcommand{\Fx}{\mathbb{F}}
\newcommand{\Ex}{\mathbb{E}}
\newcommand{\F}{\mathcal{F}}
\newcommand{\N}{\mathbb{N}}
\newcommand{\I}{\mathds{1}}
\newcommand{\BRn}{{\cal B}_{D}}
\newcommand{\lc}{\langle}
\newcommand{\rc}{\rangle}
\newcommand{\Aa}{{\bf(A$_{b,\sigma}$)}}
\newcommand{\Ag}{{\bf(A$_{g}$)}}
\newcommand{\Ax}{{\bf(A$_{X_0}$)}}
\newcommand{\Agl}{{\bf(A$_{gl}$)}}
\newcommand{\Ab}{{\bf(A$_{X}$)}}
\newcommand{\AD}{{\bf(A$_{D,\sigma}$)}}
\newcommand{\Absig}{{\bf(A$_{b,\sigma}'$)}}
\newcommand{\Acir}{{\bf(A$_{\rm 1d}$)}}
\definecolor{linkcolor}{rgb}{0,0,0.502}
\definecolor{urlcolor}{rgb}{1,0,0}
\begin{document}

\title{Probabilistic Analysis of Replicator-Mutator Equations}
\author{Lijun Bo\thanks{Email: lijunbo@ustc.edu.cn, School of Mathematical Sciences, University of Science and Technology of China, Hefei, Anhui
Province, 230026, China, and Wu Wen Tsun Key
Laboratory of Mathematics, Chinese Academy of Science, Hefei, Anhui Province 230026, China.}
\and
Huafu Liao\thanks{E-mail: lhflhf@mail.ustc.edu.cn, School of Mathematical Sciences, University of Science and Technology of China, Hefei, Anhui
Province, 230026, China.}}

\maketitle

\begin{abstract}
This paper introduces a general class of Replicator-Mutator equations on a multi-dimensional fitness space. We establish a novel probabilistic representation of weak solutions of the equation by using the theory of Fockker-Planck-Kolmogorov (FPK) equations and a martingale extraction approach. The examples with closed-form probabilistic solutions for different fitness functions considered in the existing literature are provided. We also construct a particle system and prove a general convergence result to any solution to the FPK equation associated with the extended Replicator-Mutator equation with respect to a Wasserstein-like metric adapted to our probabilistic framework.
\vspace{0.2 cm}

\noindent{\textbf{AMS 2000 subject classifications}: 92B05, 35K15, 60H10, 60G46.}
\vspace{0.2 cm}

\noindent{\textbf{Keywords}:}\quad Replicator-Mutator equations; Probabilistic representation; FPK equations; martingale extraction; particle system.
\end{abstract}

\section{Introduction}

In this paper, we introduce a general class of Replicator-Mutator equations on a fitness space specified as a domain of $\R^n$ with $n\geq1$.
The classical replicator-mutator equation in evolutionary genetics described as an integro-differential Cauchy problem on the one-dimensional fitness space $\R$ is proposed by \cite{Kimura65}. This type of equations with the linear fitness function is used to model the evolution of RNA virus populations by \cite{Tsimringetal96} based on a mean-field approach. For the case where some phenotypes are infinitely well-adapted (this typically corresponds to the fitness functions which are unbounded from above), a so-called replicator-mutator model with cut-off at large phenotype is studied by \cite{RouzineWakekeyCoffin03} and \cite{SniegowskiGerrish10}, while \cite{RouzineBrunetWilke08} propose a proper stochastic treatment for large phenotypic trait region for the linear fitness case.

In recent years, rigorous mathematical treatments on existence and behaviours of solutions of Replicator-Mutator equations describing as a Cauchy problem on the one-dimensional fitness space for different types of fitness functions are developed. \cite{alfarocarles14} reduce the equation with the linear fitness function to a standard heat equation by applying a tricky transform of the solution based on the Avron-Herbst formula, and therefore, one can compute its solution explicitly. For the quadratic fitness case, a earlier work \cite{Fleming79} studies a model for inheritance of continuous polygenic traits and the equilibrium density of gametic types is found approximately. \cite{Burger98} provides an overview of the mathematical properties of various deterministic mutation-selection models. Recently, \cite{alfarocarles17} can also transform the equation to a heat equation by using a generalized lens transform of the Schr\"odinger equation. They also prove that, for any initial density, there is always extinction of the equation with the positive quadratic fitness function at a finite time. \cite{alfaroveruete19} establish an explicit solution representation of the equation as a non-local Cauchy problem via the underlying Schr\"odinger spectral elements when the fitness function is confining, and further a result on the long time behaviour related to that in \cite{Burger88} in terms of the principal eigenfunction is proved therein.

The methods used in the papers reviewed above rely on the solution-based transform or the eigenfunction expansion of operators. Our approach used in this paper to study the Replicator-Mutator equations is from a completely probabilistic perspective. To be more precise, the first goal of this paper is to provide a novel probabilistic representation of the (weak) solution of a general class of Replicator-Mutator equations on a multi-dimensional fitness space. We prove that the solution of the extended Replicator-Mutator equation can be in fact expressed in terms of probability transition density functions for some It\^o diffusion processes. The method used in this paper is jointly based on the probability representation of solutions to a class of Fockker-Planck-Kolmogorov (FPK) equations and a martingale extraction approach (see \cite{qinLinetsky16} where this approach is applied to the pricing of derivatives). Motivated by the probabilistic solution representation of the equation, we design a particle system in which the state process of homogeneous particles follows a system of stochastic differential equations (SDEs). The second goal of this paper is to establish a general convergence result (with respect to a Wasserstein-like distance adapted to our probabilistic framework) to any solution to our FPK equation associated with the extended Replicator-Mutator equation. In particular, easy byproducts of this result are uniqueness of solutions to the extended Replicator-Mutator equation and an explicit convergence rate to the solution of the equation. The strategy used in this paper is based upon a propagator method (see \cite{DelMoralMiclo2000}, \cite{Sznitman91} and \cite{Xu2018}). We also provide examples which admit the closed-form probabilistic solution of the extended Replicator-Mutator equation for different fitness functions considered in the aforementioned works.

The paper is organized as follows: we introduce in Section~\ref{sec:RMmodel} the extended Replicator-Mutator equation under a probabilistic framework. Section~\ref{sec:FPKequationexample} establishes a novel probabilistic representation of the (weak) solution of the extended Replicator-Mutator equation and provide examples with different fitness functions considered in the existing literature. Section~\ref{sec:propagation} constructs a particle system and prove a general convergence result to any solution to the FPK equation associated with the extended Replicator-Mutator equation. Thus result is usually called {\it propagation of chaos} which was first formulated by \cite{Kac1956}.

\section{Extended Replicator-Mutator equations}\label{sec:RMmodel}

In this section, we introduce a class of extended Replicator-Mutator (RM) equations as a non-local Cauchy problem on a domain in $\R^n$ for $n\geq1$ using a probabilistic framework.

\subsection{It\^o diffusion process}
This section presents a class of SDEs which is related to the representation of our extended RM equation on a domain $D$ in $\R^n$. Let $T>0$ be an arbitrary fixed time horizon and $D$ a domain in $\R^n$, i.e., an open connected subset of $\R^n$. Consider the continuous functions $b:D\to\R^{n\times1}$ and $\sigma:D\to\R^{n\times m}$, where $m\geq1$. We then introduce the following second-order differential operator acted on $C^2(D)$, which is given by, for $f\in C^2(D)$,
\begin{align}\label{eq:operatorsLn}
{\cal A}f(x) &:= b(x)^{\top}\nabla_xf(x) + \frac{1}{2}{\rm tr}[\sigma\sigma^{\top}(x)\nabla_x^2f(x)],\quad x\in D,
\end{align}
where $\nabla_x=(\partial_{x_1},\ldots,\partial_{x_n})^{\top}$ denotes the gradient operator, $\nabla_x^2$ is the corresponding Hessian matrix and ${\rm tr}$ denotes the trace operator. The operator ${\cal A}$ acted on $C^2(D)$ is in fact the infinitesimal generator of the (Markovian) solution of SDE given by, for $(t,x)\in[0,T]\times D$,
\begin{align}\label{eq:SDEXtx}
dX_s^{t,x} &= b(X_s^{t,x})ds + \sigma(X_s^{t,x})dW_s,~s\in[t,T]\nonumber\\
X_t^{t,x}&=x\in D,
\end{align}
where $W=(W_t)_{t\in[0,T]}$ is an $m$-dimensional Brownian motion on the filtered probability space $(\Omega,\F,\Fx,\Px)$ with the filtration $\Fx=(\F_t)_{t\in[0,T]}$ satisfying the usual conditions.

We impose the following assumptions on the coefficients of SDE \eqref{eq:SDEXtx} so to guarantee the existence a unique strong solution of the equation:
\begin{description}
  \item[{\Aa}](i) $b:D\to\R^{n\times1}$ and $\sigma:D\to\R^{n\times m}$ are locally Lipschitiz continuous; or (ii) for $m=n=1$, $b:D\to\R$ is locally Lipschitiz continuous and $\sigma:D\to\R$ is H\"{o}lder continuous with exponent $\gamma\in[\frac{1}{2},1)$.
  \item[{\Ab}] For all $(t,x)\in[0,T]\times D$, the solution $X^{t,x}=(X_s^{t,x})_{s\in[t,T]}$ of \eqref{eq:SDEXtx} neither explodes nor leaves $D$ before $T$, i.e., $\Px(\sup_{s\in[0,T]}|X_s^{t,x}|)=1$ and $\Px(X_s^{t,x}\in D,~\forall~s\in[t,T])=1$.
\end{description}

By Theorem 5.2.5 of \cite{karatzasshreve1991}, the assumption {\Aa}-(i) implies that SDE~\eqref{eq:SDEXtx} admits a unique strong solution $X^{t,x}$ up to a possibly finite random explosion time. Then, the assumption {\Ab} yields that this explosion time should be greater than $T$, $\Px$-a.s., and hence $X^{t,x}$ is well-defined on $[t,T]$. For the case with $m=n=1$ (i.e., SDE~\eqref{eq:SDEXtx} is a one-dimensional equation), by Proposition 5.2.13 of \cite{karatzasshreve1991}, the assumption {\Aa}-(ii) gives that SDE~\eqref{eq:SDEXtx} admits a unique strong solution $X^{t,x}$ up to a possibly finite random explosion time. Therefore, the assumption {\Ab} yields that this explosion time is greater than $T$, a.s., and hence $X^{t,x}$ is well-defined on $[t,T]$.

\subsection{Extended RM equations on $[0,T]\times D$}

This section introduces a class of extended RM equations as a non-local Cauchy problem on $[0,T]\times D$. More precisely, let ${\cal A}^*$ be the adjoint operator of ${\cal A}$ defined by \eqref{eq:operatorsLn}\footnote{It can be seen that if $(b,\sigma)$ are sufficiently smooth, then ${\cal A}^*f(x)=\frac{1}{2}\sum_{i,j=1}^n\partial_{x_ix_j}^2((\sigma\sigma^{\top})_{ij}(x)f(x))-\sum_{i=1}^n\partial_{x_i}(b_i(x)f(x))$ for $x\in D$.}. Then, the extended RM equation considered in this paper, is given by, for $(t,x)\in(0,T]\times D$,
\begin{align}\label{eq:RM-eqn-Rn}
\left\{
  \begin{array}{ll}
    \displaystyle \partial_tu(t,x)={\cal A}^*u(t,x)+\left(g(x)-\int_{D}g(y)u(t,y)dy\right)u(t,x);\\ \\
    \displaystyle u(0,x)=u_0(x),
  \end{array}
\right.
\end{align}
where $u_0:D\to\R_+$ is a probability density function, i.e., $\int_D u_0(x)dx=1$, and $g:D\to\R$ is referred to as the {\it fitness function} (see, e.g. \cite{Kimura65} and \cite{Tsimringetal96}). The condition satisfied by the fitness function $g$ will be imposed later.

We next give the definition of a weak solution of the above extended RM equation~\eqref{eq:RM-eqn-Rn}. To this purpose, let $C_0^{\infty}(D)$ be the space of infinitely differentiable functions with compact support. Then, a function $u:[0,T]\times D\to\R$ is called a {\it weak solution} of the RM equation \eqref{eq:RM-eqn-Rn} if it satisfies the following variational form: for all test functions $f\in C_0^{\infty}(D)$,
\begin{align}\label{eq:weak-EM-eqn}
\lc u(t),f\rc &=\lc u_0,f\rc +\int_0^t\lc u(s),({\cal A}+g)f\rc ds-\int_0^t\lc u(s),g\rc \lc u(s),f\rc ds,
\end{align}
where the integral $\lc u(t),f\rc:=\int_D u(t,x)f(x)dx$. One of objectives of the paper is to establish a closed-form probabilistic representation of the weak solution to the RM equation~\eqref{eq:RM-eqn-Rn} in the above distributional sense.  We provide sufficient conditions satisfied by the fitness function $g$ under which the solution of \eqref{eq:weak-EM-eqn} admits an explicit form which can be expressed in terms of the transition density function of some It\^o diffusion processes. The main strategy for achieving this aim is to provide a probabilistic representation of the solution of \eqref{eq:RM-eqn-Rn} by using the theory of FPK equations and a martingale extraction approach.

The second objective of this paper is to establish the {\it propagation of chaos} of our RM equation \eqref{eq:RM-eqn-Rn} for a relatively large class of fitness functions $g$. It is in fact equivalent to the convergence of the empirical measure of a (homogeneous) particle system to an arbitrary solution to the FPK equation related to \eqref{eq:weak-EM-eqn} with respect to a Wasserstein-like distance adapted to our probabilistic framework. We also establish a rate of convergence in the propagation of chaos. This in particular implies uniqueness in the class of all weak solutions of our RM equation~\eqref{eq:weak-EM-eqn}.

\section{FPK equation and examples}\label{sec:FPKequationexample}

This section introduces a class of FPK equations associated with the RM equation \eqref{eq:weak-EM-eqn}. We establish a probabilistic representation of the solution of the FPK equation and then the probabilistic solution of \eqref{eq:weak-EM-eqn} follows by assuming the absolute continuity of the initial date of the FPK equation w.r.t. Lebesgue measure. Finally, we provide examples with closed-form probabilistic solutions for different fitness functions considered in the existing literature.

\subsection{Probabilistic solution of FPK equation}
Denote by ${\cal P}(D)$ the set of probability measures on ${\cal B}_{D}$ (i.e., the $\sigma$-algebra generated by the open subsets of $D$). For $p\geq1$, let ${\cal P}_p(D)\subset{\cal P}(D)$ be the set of probability measures on ${\cal B}_{D}$ with finite $p$-order moment. We introduce the FPK equation associated with \eqref{eq:weak-EM-eqn}, which is given by, for $f\in C_0^{\infty}(D)$,
\begin{align}\label{eq:FPKeqn}
\lc\mu_t,f\rc = \lc\rho_0,f\rc+\int_0^t \lc\mu_s,({\cal A}+g)f\rc ds - \int_0^t\lc\mu_s,f\rc\lc\mu_s,g\rc ds,\quad t\in[0,T],
\end{align}
where the initial datum $\rho_0\in{\cal P}(D)$ and the integral $\lc\mu_t,f\rc:=\int_D fd\mu_t$ for $t\in[0,T]$.
\begin{remark}\label{rem:densityofFPK}
If the initial datum $\rho_0$ of the FPK equation \eqref{eq:FPKeqn} admits a density function given by $u_0(x)$ for $x\in D$, namely, $\rho_0(dx)=u_0(x)dx$, then, for any $t\in[0,T]$, the solution $\mu_t(dx)=u(t,x)dx$, where $u(t,x)$ for $(t,x)\in[0,T]\times D$ is the (weak) solution of the RM equation~\eqref{eq:weak-EM-eqn}.
\end{remark}

The following lemma provides a probabilistic representation of the ${\cal P}(D)$-valued solution $\mu=(\mu_t)_{t\in[0,T]}$ for the FPK equation.
\begin{lemma}\label{lem:solutionFPK}
Let assumptions {\Aa} and {\Ab} hold. Assume that the fitness function $g:D\to\R$ satisfies that
\begin{description}
  \item[{\Ag}] For any $(t,x)\in[0,T]\times D$, $\Ex[\exp(\int_0^tg(X_s^{x})ds)]<+\infty$. Moreover, for $t\in[0,T]$, $x\to\Ex[\exp(\int_0^tg(X_s^{x})ds)]$ is in $L^1(D;\rho_0)$, where $X_t^{x}:=X_{t}^{0,x}$ for $t\in[0,T]$.
\end{description}
Let us define that, for $t\in[0,T]$,
\begin{align}\label{eq:solutionmut}
\mu_t= \frac{\int_{D}\Ex\left[\delta_{X_t^y}\exp\left(\int_0^tg(X_s^y)ds\right)\right]\rho_0(dy)}{\int_{D}\Ex\left[\exp\left(\int_0^tg(X_s^y)ds\right)\right]\rho_0(dy)},
\quad {\rm on}~{\cal B}_{D},
\end{align}
where $\delta$ denotes Dirac-delta measure. Then, $\mu=(\mu_t)_{t\in[0,T]}$ defined by \eqref{eq:solutionmut} is a ${\cal P}(D)$-valued solution of the FPK equation \eqref{eq:FPKeqn}.
\end{lemma}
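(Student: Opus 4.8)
The plan is to read \eqref{eq:solutionmut} as the normalisation of an \emph{unnormalised} measure that obeys a linear Feynman--Kac equation, and then to generate the quadratic term in \eqref{eq:FPKeqn} purely through the normalisation. For $f\in C_0^\infty(D)$ and $t\in[0,T]$ I set
\[
N_t(f):=\int_D\Ex\Bigl[f(X_t^y)\exp\Bigl(\int_0^t g(X_s^y)\,ds\Bigr)\Bigr]\rho_0(dy),\qquad Z_t:=\int_D\Ex\Bigl[\exp\Bigl(\int_0^t g(X_s^y)\,ds\Bigr)\Bigr]\rho_0(dy),
\]
so that $\lc\mu_t,f\rc=N_t(f)/Z_t$ by \eqref{eq:solutionmut}. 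First I would check well-definedness: under \Ag\ the map $y\mapsto\Ex[\exp(\int_0^t g(X^y_s)ds)]$ is finite $\rho_0$-a.e.\ and lies in $L^1(D;\rho_0)$, and since the integrand is strictly positive we get $0<Z_t<+\infty$; as $\delta_{X_t^y}$ and the exponential weight are nonnegative, $\mu_t$ is a nonnegative measure of total mass $Z_t/Z_t=1$, hence $\mu_t\in{\cal P}(D)$. Measurability of $y\mapsto\Ex[\,\cdot\,]$ follows from the continuous dependence on the initial condition of the strong solution of \eqref{eq:SDEXtx} guaranteed by \Aa.

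The core step is to establish the \emph{linear} identity
\[
N_t(f)=\lc\rho_0,f\rc+\int_0^t N_s\bigl(({\cal A}+g)f\bigr)\,ds,\qquad t\in[0,T].\qquad(\star)
\]
To get $(\star)$ I would fix $y\in D$ and apply It\^o's formula to $s\mapsto f(X_s^y)\exp(\int_0^s g(X_r^y)dr)$; since the exponential weight is absolutely continuous in $s$ with pathwise derivative $g(X_s^y)\exp(\int_0^s g\,dr)$, the product rule gives
\[
f(X_t^y)e^{\int_0^t g(X_s^y)ds}=f(y)+\int_0^t e^{\int_0^s g(X_r^y)dr}({\cal A}+g)f(X_s^y)\,ds+\int_0^t e^{\int_0^s g(X_r^y)dr}\nabla_x f(X_s^y)^{\top}\sigma(X_s^y)\,dW_s.
\]
Taking expectations, integrating in $y$ against $\rho_0$, and invoking Fubini then yields $(\star)$, \emph{provided} the $dW$-integral has zero mean. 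Here the $ds$-term is harmless: because $f\in C_0^\infty(D)$, the function $({\cal A}+g)f$ is bounded with compact support $K$, so its integrand is dominated by $C\,e^{\int_0^s g(X_r^y)dr}$, whose $\Px\otimes ds\otimes\rho_0$-integral is $C\int_0^t Z_s\,ds<+\infty$ by \Ag, licensing Fubini.

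The hard part will be justifying that the stochastic-integral term contributes nothing in expectation, since \Ag\ controls only the \emph{first} exponential moment of $\int_0^t g(X_s^y)ds$, whereas the $L^2$-isometry would call for the second. I would therefore argue by localisation: taking a reducing sequence $(\tau_k)$ of stopping times confining $X^y$ to compact subsets exhausting $D$, on $[0,\tau_k]$ the integrand is bounded (as $g$, $\sigma$ and $\nabla_x f$ are bounded on the relevant compact set), so each stopped stochastic integral is a genuine martingale and the stopped version of $(\star)$ holds in expectation. By \Ab\ the solution neither explodes nor leaves $D$ before $T$, so $\tau_k\uparrow$ a time a.s.\ strictly larger than $T$; consequently $t\wedge\tau_k=t$ for all $k$ large (a.s.), and the endpoint term is \emph{eventually} equal to $f(X_t^y)e^{\int_0^t g\,ds}$. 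The remaining obstacle is the interchange of this a.s.\ limit with the expectation, which I would secure by a uniform-integrability argument for the weighted family, drawing on the first-moment bound of \Ag\ together with the boundedness of $f$; dominated convergence on the $ds$-term (with the dominating function above) then delivers $(\star)$, and the same bounds show $t\mapsto N_t(f)$ and $t\mapsto Z_t$ are absolutely continuous on $[0,T]$.

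Finally I would extract the nonlinearity through the normalisation. Rather than testing $(\star)$ against the constant function (which is not in $C_0^\infty(D)$), I use the pathwise identity $\exp(\int_0^t g\,ds)=1+\int_0^t g(X_s^y)\exp(\int_0^s g\,dr)\,ds$; taking expectations, integrating in $y$ and applying Fubini (justified by \Ag) gives $Z_t=1+\int_0^t N_s(g)\,ds$, whence $Z_0=1$, $Z_t>0$, and $\dot Z_t=N_t(g)=Z_t\lc\mu_t,g\rc$ for a.e.\ $t$. Writing $\lc\mu_t,f\rc=N_t(f)/Z_t$ and differentiating this ratio of absolutely continuous functions (the denominator staying bounded away from $0$ on $[0,T]$) yields, for a.e.\ $t$,
\[
\frac{d}{dt}\lc\mu_t,f\rc=\frac{N_t\bigl(({\cal A}+g)f\bigr)}{Z_t}-\frac{N_t(f)}{Z_t}\,\frac{\dot Z_t}{Z_t}=\lc\mu_t,({\cal A}+g)f\rc-\lc\mu_t,f\rc\lc\mu_t,g\rc.
\]
Integrating from $0$ to $t$ and noting $\lc\mu_0,f\rc=N_0(f)/Z_0=\lc\rho_0,f\rc$ recovers exactly \eqref{eq:FPKeqn}, completing the verification that $\mu=(\mu_t)_{t\in[0,T]}$ is a ${\cal P}(D)$-valued solution.
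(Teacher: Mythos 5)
Your proposal is correct and follows essentially the same route as the paper's own proof: both arguments rest on the pathwise exponential identity that yields the ODE $\dot Z_t = Z_t\lc\mu_t,g\rc$ for the normalizer (the paper's $h_t$ is your $Z_t$) together with the It\^o/Feynman--Kac identity for the weighted process, the only difference being bookkeeping --- you derive the linear evolution of the unnormalized functional $N_t(f)$ and then differentiate the quotient $N_t(f)/Z_t$, whereas the paper first absorbs the normalization into the exponent via \eqref{eq:explcmutgrc}--\eqref{eq:limitmu} and applies It\^o once to get the nonlinear equation directly. If anything, your explicit localization and uniform-integrability discussion of why the stochastic integral has zero mean is more careful than the paper, which asserts the expected It\^o identity without comment.
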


\begin{proof}
It is obvious to see that $\mu_t$ defined by \eqref{eq:solutionmut} is a probability measure on ${\cal B}_D$ for any $t\in[0,T]$, where $\mu_0=\rho_0$.
We define that
\begin{align*}
h_t:=\int_D\Ex\left[\exp\left(\int_0^tg(X_s^y)ds\right)\right]\rho_0(dy),\quad t\in[0,T],
\end{align*}
which is well-defined due to the assumption {\Ag}. By virtue of the representation \eqref{eq:solutionmut}, we have that, for $s\in[0,T]$,
\begin{align*}
h_s\lc\mu_s,g\rc=\int_{D}\Ex\left[g(X_s^y)\exp\left(\int_0^sg(X_r^y)dr\right)\right]\rho_0(dy).
\end{align*}
Integrate on both sides of the above equality w.r.t. $s$ from $0$ to $t\in(0,T]$, it follows from the Fubini's theorem that
\begin{align*}
\int_0^t h_s\lc\mu_s,g\rc ds&=\int_{D}\Ex\left[\int_0^tg(X_s^y)\exp\left(\int_0^sg(X_r^y)dr\right)ds\right]\rho_0(dy)\nonumber\\
&=\int_D\Ex\left[\exp\left(\int_0^tg(X_s^y)ds\right)\right]\rho_0(dy)-1=h_t-1.
\end{align*}
This yields the ODE given by $h_t'=h_t\lc\mu_t,g\rc$ with $h_0=1$. Then, it holds that, for $t\in[0,T]$,
\begin{align}\label{eq:explcmutgrc}
\exp\left(\int_0^t\lc\mu_s,g\rc ds\right)=\int_{D}\Ex\left[\exp\left(\int_0^t g(X_s^{y})ds\right)\right]\rho_0(dy).
\end{align}
It follows from \eqref{eq:solutionmut} and \eqref{eq:explcmutgrc} that
\begin{align}\label{eq:limitmu}
\mu_t &= \int_{\R}\Ex\left[\delta_{X_t^{y}}\exp\left(\int_0^t(g(X_s^{y})-\lc\mu_s,g\rc) ds\right)\right]\rho_0(dy),\quad {\rm on}~{\cal B}_{D}.
\end{align}
Then, for any test function $f\in C_0^{\infty}(D)$, by applying It\^o formula to $f(X_t^y)$, which yields that
\begin{align*}
&\Ex\left[f(X_t^y)\exp\left(\int_0^t(g(X_s^{y})-\lc\mu_s,g\rc) ds\right)\right]= f(y)\nonumber\\
&\qquad+\int_0^t \Ex\left[\exp\left(\int_0^s(g(X_r^{y})-\lc\mu_r,g\rc) dr\right){\cal A}f(X_s^y)\right]ds\nonumber\\
&\qquad+\int_0^t \Ex\left[\exp\left(\int_0^s(g(X_r^{y})-\lc\mu_r,g\rc) dr\right)f(X_s^y)(g(X_s^{y})-\lc\mu_s,g\rc)\right]ds.
\end{align*}
Integrate on both sides of the above equality w.r.t. $\rho_0(dy)$. Then, by virtue of \eqref{eq:limitmu}, we arrive at
\begin{align*}
\lc\mu_t,f\rc&=\lc\rho_0,f\rc+\int_0^t\lc\mu_s,{\cal A}f\rc ds+\int_0^t \lc\mu_s,gf-\lc\mu_s,g\rc f\rc ds.
\end{align*}
This yields the FPK equation~\eqref{eq:FPKeqn}.
\end{proof}

\subsection{Closed-form representation of $\mu$ given by \eqref{eq:solutionmut}}\label{eq:solFPKexam}

We study a closed-form representation of the ${\cal P}(D)$-valued function $\mu$ given by \eqref{eq:solutionmut} in Lemma~\ref{lem:solutionFPK} by applying the martingale approach. We find that the obtained explicit form of $\mu$ can be in fact represented in terms of the transition density function of a class of It\^o diffusion processes. Moreover, we provide examples in which the expressions of $\mu$ and its density admit complete closed-form.

By \eqref{eq:solutionmut}, the representation of $\mu$ can be reduced to identify the following function: for $f\in C_b(D)$, i.e., the space of bounded continuous functions on $D$, define
\begin{align}\label{eq:Phitx}
\Phi^{f}(t,x):= \Ex\left[f(X_t^{x})\exp\left(\int_0^t g(X_s^{x})ds\right)\right],\quad (t,x)\in[0,T]\times D,
\end{align}
where the process $X^{x}=(X_t^{0,x})_{t\in[0,T]}$ is the unique strong solution of SDE~\eqref{eq:SDEXtx}. We next apply a martingale approach to give a closed-form representation of $\mu$ when the coefficients of equation $(b,\sigma,g)$ satisfy an additional constraint.
\begin{lemma}\label{lem:simpcaseRMM}
Let assumptions {\Aa}, {\Ab} and {\Ag} hold. Assume additionally that
\begin{description}
  \item[{\rm(i)}] the fitness function $g\in C^2(D)$ and satisfies that, for all $x\in D$,
  \begin{align}\label{eq:Agcond}
  {\cal A}g(x)=C_1\in\R,~\text{and}~~\nabla_xg(x)^{\top}\sigma(x)=C_2\in\R^{1\times m},
  \end{align}
  where the operator ${\cal A}$ acted on $C^2(D)$ is given by \eqref{eq:operatorsLn}.
  \item[{\rm(ii)}] for $y\in D$, let $\overline{X}^y=(\overline{X}_t^y)_{t\in[0,T]}$ be the strong solution of the following SDE:
  \begin{align}\label{eq:SDEbarX0}
    \overline{X}_t^y = y+ \int_0^t\overline{b}(s,\overline{X}_s^y)ds + \int_0^t\sigma(\overline{X}_s^y)dW_s,
\end{align}
where the time-dependent function $\overline{b}(t,x):=b(x)-t\sigma(x)C_2^{\top}$ for $(t,x)\in[0,T]\times D$.
\end{description}
Then, for any $T>0$, the solution $\mu$ given by \eqref{eq:solutionmut} of the FPK equation \eqref{eq:FPKeqn} admits the following form:
\begin{align}\label{eq:mudx02}
\mu_t(dx) =\frac{e^{tg(x)}\int_{D}\Px(\overline{X}_t^y\in dx)\rho_0(dy)}{\int_{\R}e^{tg(z)}\int_{D}\Px(\overline{X}_t^y\in dz)\rho_0(dy)},\quad t\in[0,T].
\end{align}
Moreover, if $\rho_0(dx)=u_0(x)dx$ and the density function $\overline{p}(t,y;x)$ of $\overline{X}_t^y$ for $t\in[0,T]$ exists, namely
\[
\Px(\overline{X}_t^y\in dx)=\overline{p}(t,y;x)dx,
\]
then, for all $t\in[0,T]$, the probability measure $\mu_t(dx)=u(t,x)dx$. Here $u(t,x)$ satisfies the extended RM equation~\eqref{eq:weak-EM-eqn}, and it has the following representation given by
\begin{align}\label{eq:densityutx0}
u(t,x)=\frac{e^{tg(x)}\int_{D}\overline{p}(t,y;x)u_0(y)dy}{\int_{D}e^{tg(z)}\left(\int_{D}\overline{p}(t,y;z)u_0(y)dy\right)dz},\quad (t,x)\in[0,T]\times D.
\end{align}
\end{lemma}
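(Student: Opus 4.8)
The plan is to reduce the whole statement to a single change-of-measure identity for the functional $\Phi^f$ of \eqref{eq:Phitx}. Testing \eqref{eq:solutionmut} against an arbitrary $f\in C_b(D)$ gives $\lc\mu_t,f\rc=\big(\int_D\Phi^f(t,y)\rho_0(dy)\big)\big/\big(\int_D\Phi^1(t,y)\rho_0(dy)\big)$, so the Lemma follows once I show that for every $f\in C_b(D)$,
\[ \Phi^f(t,y)=e^{\kappa(t)}\,\Ex\big[f(\overline{X}_t^y)e^{tg(\overline{X}_t^y)}\big], \]
where $\kappa(t)$ is a deterministic function of $t$ \emph{alone}, independent of $y$ and of $f$. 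The $y$-independence of the prefactor $e^{\kappa(t)}$ is the crux: it cancels between numerator and denominator of the ratio, leaving exactly the closed form \eqref{eq:mudx02}.

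To establish this identity I would first apply It\^o's formula to $s\mapsto g(X_s^y)$. Since $g\in C^2(D)$ and the structural constraints \eqref{eq:Agcond} force ${\cal A}g\equiv C_1$ and $\nabla_xg^\top\sigma\equiv C_2$, the process has constant coefficients, $dg(X_s^y)=C_1\,ds+C_2\,dW_s$. Applying the product rule to $s\,g(X_s^y)$ and integrating then yields the exact pathwise identity
\[ \int_0^t g(X_s^y)\,ds = t\,g(X_t^y)-\tfrac12 C_1 t^2 - C_2\int_0^t s\,dW_s. \]
Exponentiating and completing the stochastic-exponential square gives $\exp(\int_0^t g(X_s^y)\,ds)=e^{\kappa(t)}\,e^{t g(X_t^y)}\,Z_t$, where $Z_t:=\exp(-C_2\int_0^t s\,dW_s-\tfrac12|C_2|^2\int_0^t s^2\,ds)$ and $\kappa(t):=-\tfrac12 C_1 t^2+\tfrac16|C_2|^2 t^3$ is deterministic.

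Next I would use $Z_t$ to change measure. Because the integrand $s\mapsto C_2^\top s$ is deterministic and bounded on $[0,T]$, Novikov's condition holds trivially, so $(Z_t)$ is a genuine $\Px$-martingale and $d\Qx/d\Px|_{\F_t}=Z_t$ defines a probability measure $\Qx$. By Girsanov's theorem $\widetilde{W}_s:=W_s+C_2^\top s^2/2$ is a $\Qx$-Brownian motion, and substituting $dW_s=d\widetilde{W}_s-C_2^\top s\,ds$ into \eqref{eq:SDEXtx} shows that under $\Qx$ the process $X^y$ solves precisely the SDE \eqref{eq:SDEbarX0} with drift $\overline b(s,x)=b(x)-s\sigma(x)C_2^\top$; hence the $\Qx$-law of $X_t^y$ equals the $\Px$-law of $\overline X_t^y$. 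Writing $\Phi^f(t,y)=e^{\kappa(t)}\Ex^{\Px}[f(X_t^y)e^{tg(X_t^y)}Z_t]=e^{\kappa(t)}\Ex^{\Qx}[f(X_t^y)e^{tg(X_t^y)}]=e^{\kappa(t)}\Ex^{\Px}[f(\overline X_t^y)e^{tg(\overline X_t^y)}]$ delivers the displayed identity, and feeding it into the ratio (cancelling $e^{\kappa(t)}$) yields \eqref{eq:mudx02}. The density version \eqref{eq:densityutx0} then follows by inserting $\rho_0(dy)=u_0(y)dy$ and $\Px(\overline X_t^y\in dx)=\overline p(t,y;x)dx$, together with Remark~\ref{rem:densityofFPK} identifying $\mu_t(dx)=u(t,x)dx$ with the weak solution of \eqref{eq:weak-EM-eqn}.

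The It\^o computation and the Girsanov substitution are routine; the point deserving care is the interchange of the expectation with the measure change together with the Fubini step integrating $\Phi^f$ against $\rho_0$, all of which are licensed by assumption \Ag\ (finiteness and $L^1(D;\rho_0)$-integrability of the exponential functional), guaranteeing that every quantity in the ratio is finite. The conceptual heart, however, is simply that this martingale extraction isolates a deterministic, $y$-free prefactor $e^{\kappa(t)}$: without the two constancy conditions in \eqref{eq:Agcond} the extracted correction term would depend on the trajectory (hence on $y$) and would no longer cancel, so the clean closed form would break down.
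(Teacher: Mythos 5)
Your proof is correct and follows essentially the same route as the paper's: It\^o/integration by parts to obtain $\int_0^t g(X_s^y)\,ds = tg(X_t^y)-\tfrac12 C_1t^2 - C_2\int_0^t s\,dW_s$, extraction of the exponential martingale (Novikov holds since the integrand is deterministic), a Girsanov change of measure identifying the $\Qx$-law of $X^y$ with the $\Px$-law of $\overline{X}^y$ from \eqref{eq:SDEbarX0}, and cancellation of the deterministic, $y$-independent prefactor in the ratio \eqref{eq:solutionmut}, followed by the same density substitution for \eqref{eq:densityutx0}. The only discrepancy is cosmetic and in your favor: your Girsanov shift $\widetilde W_s = W_s + C_2^{\top}s^2/2$ is the correct one, whereas the paper's text writes $B_t = W_t + C_2^{\top}t$ (a typo), though its subsequent dynamics under $\Qx$ agrees with yours.
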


\begin{proof}
Integration by parts yields that, for $(t,y)\in[0,T]\times D$,
\begin{align*}
\int_0^t g(X_s^y)ds &= tg(X_t^y)-\int_0^t s{\cal A}g(X_s^y)ds-\int_0^t s \nabla_xg(X_s^y)^{\top}\sigma(X_s^y)dW_s\nonumber\\
&=tg(X_t^y)+\int_0^t\{\frac{1}{2}s^2\nabla_xg(X_s^y)^{\top}(\sigma\sigma^{\top})(X_s^y)\nabla_xg(X_s^y)-s{\cal A}g(X_s^y)\}ds\nonumber\\
&\quad-\int_0^t s \nabla_xg(X_s^y)^{\top}\sigma(X_s^y)dW_s-\frac{1}{2}\int_0^ts^2\nabla_xg(X_s^y)^{\top}(\sigma\sigma^{\top})(X_s^y)\nabla_xg(X_s^y)ds.
\end{align*}
Note that, by \eqref{eq:Agcond} in the condition (i), we have that ${\cal A}g(x)\equiv C_1\in\R$ and $\nabla_xg(x)^{\top}\sigma(x)=C_2\in\R^{1\times m}$. Then, by verifying the Novikov's condition, we can define a $(\Px,\Fx)$-martingale as follows:
\begin{align*}
N_t:=\exp\left(-\int_0^t s C_2dW_s-\frac{C_2C_2^{\top}}{6}t^3\right),\quad t\in[0,T].
\end{align*}
Therefore, by virtue of \eqref{eq:Phitx}, for $(t,y)\in[0,T]\times D$,
\begin{align}\label{eq:PhiEN}
\Phi^f(t,y)=\Ex\left[N_tf(X_t^{y})\exp\left(tg(X_t^y)+\frac{C_2C_2^{\top}}{6}t^3-\frac{C_1}{2}t^2\right)\right].
\end{align}
Let us define $\frac{d\Qx}{d\Px}|_{\F_t}=N_t$, for $t\in[0,T]$. Then $B_t:=W_t+C_2^{\top}t$ for $t\in[0,T]$ is an $m$-dimensional Brownian motion under the probability measure $\Qx$. Moreover, under the new probability measure $\Qx$, the dynamics of the process $X^y=(X_t^y)_{t\in[0,T]}$ is given by, for $y\in D$,
\begin{align*}
X_t^y = y+ \int_0^t(b(X_s^y)-s\sigma(X_s^y)C_2^{\top})ds + \int_0^t\sigma(X_s^y)dB_s.
\end{align*}
By applying \eqref{eq:PhiEN}, we arrive at
\begin{align*}
\Phi^f(t,y)&=\exp\left(\frac{C_2C_2^{\top}}{6}t^3-\frac{C_1}{2}t^2\right)\Ex^{\Qx}\left[f(X_t^{y})e^{tg(X_t^y)}\right]\nonumber\\
&=\exp\left(\frac{C_2C_2^{\top}}{6}t^3-\frac{C_1}{2}t^2\right)\int_{D}f(x)e^{tg(x)}\Qx(X_t^y\in dx).
\end{align*}
Then, the solution representation \eqref{eq:solutionmut} yields that, for $t\in[0,T]$,
\begin{align}\label{eq:solutionmut11}
\int_{D} f(x)\mu_t(dx) = \frac{\int_{D}f(x)e^{tg(x)}\int_{D}\Qx(X_t^y\in dx)\rho_0(dy)}{\int_{D}e^{tg(x)}\int_{D}\Qx(X_t^y\in dx)\rho_0(dy)}.
\end{align}
Note that $\Qx(X_t^y\in dx)=\Px(\overline{X}_t^y\in dx)$. Therefore, the desired results \eqref{eq:mudx02} and \eqref{eq:densityutx0} follows from the equality \eqref{eq:solutionmut11}.
\end{proof}

Lemma~\ref{lem:simpcaseRMM} works well for the case where the drift and volatility functions $(b,\sigma)$ are constant matrix and the fitness function $g$ is a linear mapping. This is documented in the following example:
\begin{example}\label{eq:constdrift-volatility}
Consider $m=n$, $b(x)\equiv b\in\R^{n}$ and $\sigma(x)\equiv\sigma\in\R^{n\times n}$. Let $\sigma\in\R^{n\times n}$ be invertible and its invertible matrix is given by $\sigma^{-1}$. In this example, SDE~\eqref{eq:SDEXtx} is reduced to a drift-Brownian motion described as:
\begin{align*}
X_t^{x}=x+bt+\sigma W_t,\quad t\in[0,T].
\end{align*}
Therefore, the domain $D=\R^n$. For any $C_2\in\R^{1\times n}$, we consider the following fitness function given by
\begin{align}\label{eq:linear-fitness}
g(x) = C_2\sigma^{-1}x,\quad x\in\R^{n\times1}.
\end{align}
Then, a direct calculation yields that, for all $t\in[0,T]$,
\begin{align*}
  \Ex\left[\exp\left(\int_0^tg(X^x_s)ds\right)\right]&=e^{tC_2\sigma^{-1}x+\frac{t^2}{2}C_2\sigma^{-1}b}
  \Ex\left[\exp\left(tC_2W_t-C_2\int_0^tsdW_s\right)\right]\notag\\
  &=\exp\left(tC_2\sigma^{-1}x+\frac{t^2}{2}C_2\sigma^{-1}b+\frac{\sqrt{3}}6t^{\frac32}\sqrt{C_2C_2^{\top}}\right).
\end{align*}
If the initial datum $\rho_0$ of the FPK equation~\eqref{eq:FPKeqn} satisfies $\int_De^{tC_2\sigma^{-1}x}\rho_0(dx)<\infty$, then the assumption {\Ag} holds.
Thus, we have from \eqref{eq:linear-fitness} that
\[
{\cal A}g\equiv C_1:=C_2\sigma^{-1}b\in\R.
\]
Using the condition {\rm(ii)} of Lemma~{\rm\ref{lem:simpcaseRMM}}, the process $\overline{X}^y=(\overline{X}_t^y)_{t\in[0,T]}$ is given by
\begin{align*}
\overline{X}_t^y = y+ bt -\frac{\sigma C_2^{\top}}{2}t^2 +\sigma W_t,\quad (t,y)\in[0,T]\times\R^n.
\end{align*}
Hence, under the original probability measure $\Px$, we have that
\[
\overline{X}_t^y\sim N\left(y+ bt -\frac{\sigma C_2^{\top}}{2}t^2,\sigma\sigma^{\top}t\right),\quad t\in(0,T].
\]
This implies that the density function $\overline{p}(t,y;x)$ of $\overline{X}_t^y$ has the following closed-form representation given by $\overline{p}(0,y;x)=\delta_{x-y}$, and for $(t,x,y)\in(0,T]\times\R^{2n}$,
\begin{align}\label{eq:barptyxRn}
\overline{p}(t,y;x)&=\frac{1}{(2\pi)^{\frac{n}{2}}}\frac{1}{\sqrt{{\rm det}(\sigma\sigma^{\top})t}}\\
&\quad\times
\exp\left\{-\frac{1}{2t}\left(x-y-bt +\frac{\sigma C_2^{\top}}{2}t^2\right)^{\top}(\sigma\sigma^{\top})^{-1}\left(x-y-bt +\frac{\sigma C_2^{\top}}{2}t^2\right)\right\}.\nonumber
\end{align}
It follows from \eqref{eq:densityutx0} in Lemma~{\rm\ref{lem:simpcaseRMM}} that the density function of $\mu_t$ is given by
\begin{align}\label{eq:densityutx0examRn}
u(t,x)=\frac{e^{C_2\sigma^{-1}xt}\int_{\R^n}\overline{p}(t,y;x)u_0(y)dy}
{\int_{\R^n}e^{C_2\sigma^{-1}zt}\left(\int_{\R^n}\overline{p}(t,y;z)u_0(y)dy\right)dz},\quad (t,x)\in[0,T]\times\R^n.
\end{align}
This in fact establishes the probabilistic representation of the weak solution to the following RM equation with the fitness space $D=\R^n$:
\begin{align}\label{exam:1RM-eqn-Rn}
\left\{
  \begin{array}{ll}
    \displaystyle \partial_tu(t,x)=\frac{\sigma\sigma^{\top}}{2}\Delta u(t,x)-b^{\top}\nabla_xu(t,x)+\left(g(x)-\int_{\R^n}g(y)u(t,y)dy\right)u(t,x);\\ \\
    \displaystyle u(0,x)=u_0(x),~x\in\R^n.
  \end{array}
\right.
\end{align}
\end{example}
A special case of Example~\ref{eq:constdrift-volatility} above is the one-dimensional RM equation with $b\equiv0$, $\sigma=\sqrt{2}$ and the fitness function $g(x)=x$ for $x\in\R$. Then, Eq.~\eqref{exam:1RM-eqn-Rn} becomes that
\begin{align}\label{exam:ACRM-eqn-Rn}
\left\{
  \begin{array}{ll}
    \displaystyle \partial_tu(t,x)=\Delta u(t,x)+\left(x-\int_{\R}yu(t,y)dy\right)u(t,x);\\ \\
    \displaystyle u(0,x)=u_0(x),~x\in\R.
  \end{array}
\right.
\end{align}
The explicit solution to the RM equation \eqref{exam:ACRM-eqn-Rn} has been studied by \cite{alfarocarles14}. We next verify that the probabilistic solution \eqref{eq:densityutx0examRn} of the RM equation \eqref{exam:ACRM-eqn-Rn} coincides with the solution form given by (2.3) of Theorem 2.3 in \cite{alfarocarles14} via a tricky transform of the solution based on the Avron-Herbst formula. In fact, in view of Lemma~\ref{lem:simpcaseRMM}, from \eqref{eq:linear-fitness} and \eqref{eq:barptyxRn}, it results in $C_2=\sqrt{2}$, and hence
\begin{align}\label{eq:densityutx0examR1}
\overline{p}(t,y;x)=\frac{1}{2\sqrt{\pi t}}\exp\left(-\frac{(x-y+t^2)^2}{4t}\right),\quad (t,x,y)\in[0,T]\times\R^2.
\end{align}
This yields that the probability solution \eqref{eq:densityutx0examRn} of the RM equation \eqref{exam:ACRM-eqn-Rn} is given by, for $(t,x,y)\in[0,T]\times\R^2$,
\begin{align}\label{eq:densityutx011R1}
u(t,x)=\frac{e^{tx}\int_{\R}
\exp\left(-\frac{(x-y+t^2)^2}{4t}\right)u_0(y)dy}{\int_{\R}e^{tz}\left(\int_{\R}\exp\left(-\frac{(z-y+t^2)^2}{4t}\right)u_0(y)dy\right)dz}.
\end{align}
By \eqref{eq:solutionmut} and the assumption {\Ag}, it is not difficult to verify that, for $t\in[0,T)$,
\begin{align*}
  \int_{\R}xu(t,x)dx=\int_{\mathbb R}x\mu_t(dx)= \frac{\int_{D}\Ex\left[X_t^y\exp\left(\int_0^tg(X_s^y)ds\right)\right]\rho_0(dy)}{\int_{D}\Ex\left[\exp\left(\int_0^tg(X_s^y)ds\right)\right]\rho_0(dy)}<\infty.
\end{align*}
A direct calculation gives that
\begin{align*}
\int_{\R}e^{tz}\left(\int_{\R}\exp\left(-\frac{(z-y+t^2)^2}{4t}\right)u_0(y)dy\right)dz&=\int_{\R^2}\exp\left(\frac{-(z-y-t^2)^2}{4t}+ty\right)u_0(y)dzdy\notag\\
  &=\sqrt{4\pi t}\int_{\R}e^{ty}u_0(y)dy.
\end{align*}
Therefore, for $(t,x)\in[0,T)\times\R$,
\begin{align}\label{eq:sol2poit3AC}
u(t,x)&=\frac{e^{tx}\int_{\R}\exp\left(-\frac{(x-y+t^2)^2}{4t}\right)u_0(y)dy}
{\int_{\R}e^{tz}\left(\int_{\R}\exp\left(-\frac{(z-y+t^2)^2}{4t}\right)u_0(y)dy\right)dz}\nonumber\\
&=\frac{\frac{e^{tx}}{\sqrt{4\pi t}}\int_{\R}\exp\left(-\frac{(x-y+t^2)^2}{4t}\right)u_0(y)dy}{\int_{\R}e^{ty}u_0(y)dy},
\end{align}
this is the solution representation given by (2.3) of Theorem 2.3 in \cite{alfarocarles14}.

However, the condition~\eqref{eq:Agcond} in Lemma~\ref{lem:simpcaseRMM} on the coefficients $(b,\sigma,g)$ is very restrictive. In order to bypass this constraint on $(b,\sigma,g)$, we next apply a martingale extraction approach to identify the function $\Phi^f$ defined by \eqref{eq:Phitx}. More precisely, recall the operator ${\cal A}$ acted on $C^2(D)$ which is defined by \eqref{eq:operatorsLn}. Let $(\lambda,\phi)$ be an eigenpair (if exists) of the following characteristic equation given by
\begin{align}\label{eq:eigeneqn}
({\cal A}+ g(x))\phi(x) = -\lambda \phi(x),\quad x\in D.
\end{align}
Then, it follows from \eqref{eq:eigeneqn} and It\^o formula that, for $y\in D$,
\begin{align}\label{eq:Mtmart}
M_t^y := \frac{1}{\phi(y)}\exp\left(\lambda t+\int_0^tg(X_s^y)ds\right)\phi(X_t^y),\quad t\in[0,T]
\end{align}
is a local $(\Px,\Fx)$-martingale with $M_0^y=1$.
Then, by \eqref{eq:Phitx}, it holds that
\begin{align}\label{eq:Phih2}
\Phi^{f}(t,y)= e^{-\lambda t}\phi(y)\Ex\left[M_t^y\frac{f(X_t^y)}{\phi(X_t^y)}\right],\quad (t,y)\in[0,T]\times D.
\end{align}

A key observation is that the expectation in \eqref{eq:Phih2} does not depend on the path of the process $X^y$. Then, this gives that
\begin{theorem}\label{thm:mutsol3}
Let assumptions {\Aa}, {\Ab} and {\Ag} hold. Assume additionally that, for all $y\in D$,
\begin{itemize}
  \item[{\rm(i)}] there exists an eigenpair $(\lambda,\phi)$ of the characteristic equation \eqref{eq:eigeneqn} such that the process $M^y=(M_t^y)_{t\in[0,T]}$ defined by \eqref{eq:Mtmart} is a $(\Px,\Fx)$-martingale.
  \item[{\rm(ii)}] let $\overline{X}^y=(\overline{X}_t^y)_{t\in[0,T]}$ be the strong solution of the following SDE: for $t\in[0,T]$,
  \begin{align}\label{eq:SDEbarX}
    \overline{X}_t^y = y+ \int_0^t(b(\overline{X}_s^y)+\sigma(\overline{X}_s^y)\overline{\sigma}(\overline{X}_s^y)^{\top})ds + \int_0^t\sigma(\overline{X}_s^y)dW_s,
\end{align}
where the function $\overline{\sigma}(x):=\frac{\nabla_x\phi(x)^{\top}}{\phi(x)}\sigma(x)$ for $x\in D$.
\end{itemize}
Then, for any $T>0$, the solution $\mu$ given by \eqref{eq:solutionmut} of the FPK equation \eqref{eq:FPKeqn} admits the following form:
\begin{align}\label{eq:mudx2}
\mu_t(dx) =\frac{\frac{1}{\phi(x)}\int_{D}\phi(y)\Px(\overline{X}_t^y\in dx)\rho_0(dy)}{\int_{D}\frac{\int_{D}\phi(y)\Px(\overline{X}_t^y\in dz)\rho_0(dy)}{\phi(z)}},\quad t\in(0,T].
\end{align}
Moreover, if $\rho_0(dx)=u_0(x)dx$ and the density function $\overline{p}(t,y;x)$ of $\overline{X}_t^y$ for $t\in(0,T]$ exists, in other words,
\[
\Px(\overline{X}_t^y\in dx)=\overline{p}(t,y;x)dx,
\]
then, for all $t\in[0,T]$, the probability measure $\mu_t(dx)=u(t,x)dx$. Here, $u(t,x)$ satisfies the extended RM equation~\eqref{eq:weak-EM-eqn}, and it admits the following representation given by
\begin{align}\label{eq:densityutx}
u(t,x)=\frac{\frac{1}{\phi(x)}\int_{D}\phi(y)\overline{p}(t,y;x)u_0(y)dy}{\int_{D}\frac{\int_{D}\phi(y)\overline{p}(t,y;z)u_0(y)dy}{\phi(z)}dz},\quad (t,x)\in[0,T]\times D.
\end{align}
\end{theorem}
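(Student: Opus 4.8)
The plan is to convert the weighted expectation $\Phi^f(t,y)$ appearing in \eqref{eq:Phih2} into a plain expectation under a suitably tilted measure, exactly as in Lemma~\ref{lem:simpcaseRMM} but now using the eigenpair $(\lambda,\phi)$ to perform the martingale extraction. The starting point is the identity \eqref{eq:Phih2}, which already isolates the local martingale $M^y$ of \eqref{eq:Mtmart}; assumption~(i) upgrades $M^y$ to a genuine $(\Px,\Fx)$-martingale with $M_0^y=1$, so one may legitimately define a probability measure $\Qx$ on $\F_T$ by $\frac{d\Qx}{d\Px}\big|_{\F_t}=M_t^y$ and rewrite \eqref{eq:Phih2} as $\Phi^f(t,y)=e^{-\lambda t}\phi(y)\,\Ex^{\Qx}\big[f(X_t^y)/\phi(X_t^y)\big]$.

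The central step is to identify the $\Qx$-law of $X^y$. First I would compute the stochastic differential of $M^y$: applying It\^o's formula to $\phi(X_t^y)$ and to the product in \eqref{eq:Mtmart}, the finite-variation part collects the term $({\cal A}\phi+(\lambda+g)\phi)(X_t^y)$, which vanishes identically by the eigenvalue equation \eqref{eq:eigeneqn}. Hence $M^y$ is the Dol\'eans--Dade exponential $dM_t^y=M_t^y\,\overline{\sigma}(X_t^y)\,dW_t$ with $\overline{\sigma}(x)=\frac{\nabla_x\phi(x)^{\top}}{\phi(x)}\sigma(x)$. Girsanov's theorem then yields that $B_t:=W_t-\int_0^t\overline{\sigma}(X_s^y)^{\top}ds$ is an $m$-dimensional $\Qx$-Brownian motion, under which the dynamics of $X^y$ become $dX_t^y=\big(b(X_t^y)+\sigma(X_t^y)\overline{\sigma}(X_t^y)^{\top}\big)dt+\sigma(X_t^y)\,dB_t$. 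This is precisely the SDE \eqref{eq:SDEbarX} defining $\overline{X}^y$ under $\Px$, so by weak uniqueness the $\Qx$-law of $X_t^y$ coincides with the $\Px$-law of $\overline{X}_t^y$. Consequently $\Phi^f(t,y)=e^{-\lambda t}\phi(y)\int_D\frac{f(x)}{\phi(x)}\,\Px(\overline{X}_t^y\in dx)$, and the crucial point (already flagged before the statement) is that only the one-dimensional marginal of $\overline{X}_t^y$ enters.

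To finish, I would substitute this expression into the representation \eqref{eq:solutionmut}. Testing $\mu_t$ against $f\in C_b(D)$ gives $\int_D f\,d\mu_t=\big(\int_D\Phi^f(t,y)\rho_0(dy)\big)/\big(\int_D\Phi^1(t,y)\rho_0(dy)\big)$, where $\Phi^1$ denotes $\Phi^f$ with $f\equiv1$; the common factor $e^{-\lambda t}$ cancels between numerator and denominator, and after swapping the order of integration (Fubini, justified by {\Ag}) one reads off exactly \eqref{eq:mudx2}. Taking $\rho_0(dx)=u_0(x)dx$ and inserting the transition density $\overline{p}(t,y;x)$ then produces the closed form \eqref{eq:densityutx}; that this $u$ solves the weak RM equation \eqref{eq:weak-EM-eqn} is immediate from Lemma~\ref{lem:solutionFPK} together with Remark~\ref{rem:densityofFPK}. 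I expect the main obstacle to be the Girsanov/martingale-extraction step: one must verify that the eigenvalue equation makes the drift of $M^y$ vanish so that the tilt is purely a change of Brownian drift, and one must ensure the law-identification is valid, namely positivity of $\phi$ on $D$ so that $\overline{\sigma}$ and $f/\phi$ are well-defined, and weak uniqueness for \eqref{eq:SDEbarX}; the remaining bookkeeping in \eqref{eq:solutionmut} is routine.
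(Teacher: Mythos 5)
Your proposal is correct and follows essentially the same route as the paper's proof: starting from \eqref{eq:Phih2}, computing $dM_t^y = M_t^y\overline{\sigma}(X_t^y)\,dW_t$ via the eigenvalue equation, applying Girsanov to identify the $\Qx$-dynamics of $X^y$ with the SDE \eqref{eq:SDEbarX}, equating $\Qx(X_t^y\in dx)=\Px(\overline{X}_t^y\in dx)$, and substituting into \eqref{eq:solutionmut}. If anything, you are slightly more explicit than the paper on two points it leaves tacit — that the law identification rests on weak uniqueness for \eqref{eq:SDEbarX}, and that positivity of $\phi$ is needed for $\overline{\sigma}$ and $f/\phi$ to make sense — which is a welcome clarification rather than a deviation.
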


\begin{proof}
By using \eqref{eq:Mtmart} and \eqref{eq:eigeneqn}, the dynamics of the martingale $M^y$ defined by \eqref{eq:Mtmart} is given by, for $y\in D$,
\begin{align}\label{eq:martM-sde}
dM_t^y  &=M_t^y\frac{\nabla_x\phi(X_t^y)^{\top}\sigma(X_t^y)}{\phi(X_t^y)}dW_t=M_t^y\overline{\sigma}(X_t^y)dW_t,\quad M_0^y=1.
\end{align}
Define $\frac{d\Qx}{d\Px}|_{\F_t}=M_t^y$ for $t\in[0,T]$. Then, from Girsanov's theorem, it follows that $B_t:=W_t-\int_0^t\overline{\sigma}(X_s^y)^{\top}ds$ for $t\in[0,T]$ is an $m$-dimensional Brownian motion under the new probability measure $\Qx$. Then, under $\Qx$, the dynamics of the process $X^y=(X_t^y)_{t\in[0,T]}$ is given by, for $(t,y)\in[0,T]\times D$,
\begin{align*}
X_t^y = y+ \int_0^t(b(X_s^y)+\sigma(X_s^y)\overline{\sigma}(X_s^y)^{\top})ds + \int_0^t\sigma(X_s^y)dB_s.
\end{align*}
By virtue of \eqref{eq:Phih2}, we have that, for $(t,y)\in[0,T]\times D$,
\begin{align*}
\Phi^{f}(t,y)= e^{-\lambda t}\phi(y)\Ex^{\Qx}\left[\frac{f(X_t^y)}{\phi(X_t^y)}\right]=e^{-\lambda t}\phi(y)\int_{D}\frac{f(x)}{\phi(x)}\Qx(X_t^y\in dx).
\end{align*}
Apply Lemma~\ref{lem:solutionFPK}, we obtain that, for all test functions $f\in C_0^{\infty}(D)$,
\begin{align}\label{eq:solutionmut2}
\int_{D}f(x)\mu(dx) = \frac{\int_{D}\frac{f(x)}{\phi(x)}\int_{D}\phi(y)\Qx(X_t^y\in dx)\rho_0(dy)}{\int_{D}\frac{\int_{D}\phi(y)\Qx(X_t^y\in dx)\rho_0(dy)}{\phi(x)}}.
\end{align}
Note that $\Qx(X_t^y\in dx)=\Px(\overline{X}_t^y\in dx)$. Then, the desired result follows from \eqref{eq:solutionmut2}.
\end{proof}

In order to apply Theorem~\ref{thm:mutsol3}, the key point is to identify an eigenpair $(\lambda,\phi)$ of the characteristic equation \eqref{eq:eigeneqn} so to satisfy the condition (i) of Theorem~\ref{thm:mutsol3}. We next provide examples in which we illustrate how to verify this condition (i) and derive the probability solution~\eqref{eq:mudx2}-\eqref{eq:densityutx} explicitly. We first consider the one-dimensional case, i.e., $n=m=1$. In this case, let $D=(\alpha,\beta)$ with $-\infty\leq\alpha<\beta\leq+\infty$. For this purpose, for a function $V:D\to\R$ which is locally H\"older continuous, we define the following operator acted on $C^2(D)$ as:
\begin{align}\label{eq:operatorH}
{\cal H}_V := {\cal A} + V.
\end{align}
We call a positive function $\phi\in C^2(D)$ a {\it positive harmonic function} of ${\cal H}_V$ if ${\cal H}_V\phi=0$. A function $\phi\in C^2(D)$ is said to be an {\it invariant function} of the semigroup generated by ${\cal H}_V$ if $\Ex[e^{\int_0^tV(X_s^x)ds}\phi(X_t^x)]=\phi(x)$ for $(t,x)\in[0,T]\times D$.  The following result provides the necessary and sufficient condition under which a positive harmonic function of ${\cal H}_V$ on $D$ is also an invariant function of the semigroup generated by this operator.
\begin{lemma}[Theorem 5.1.8. in \cite{pinsky95}]\label{lem:Pinsky}
Let the assumption {\Aa} and {\Ab} hold and $x_0\in D$. Then, a positive harmonic function of ${\cal H}_V$ is also an invariant function of the semigroup generated by ${\cal H}_V$ if and only if the following conditions hold:
\begin{align}\label{eq:cond1deigen}
&\int_{\alpha}^{x_0}\left[\frac{1}{\phi^2(x)}\exp\left(-\int_{x_0}^x\frac{2b(z)}{\sigma^2(z)}dz\right)\int_{x}^{x_0}\frac{\phi^2(y)}{\sigma^2(y)}
\exp\left(\int_{x_0}^y\frac{2b(z)}{\sigma^2(z)}dz\right)dy\right]dx=+\infty,\nonumber\\
&\int_{x_0}^{\beta}\left[\frac{1}{\phi^2(x)}\exp\left(-\int_{x_0}^x\frac{2b(z)}{\sigma^2(z)}dz\right)\int_{x_0}^{x}\frac{\phi^2(y)}{\sigma^2(y)}
\exp\left(\int_{x_0}^y\frac{2b(z)}{\sigma^2(z)}dz\right)dy\right]dx=+\infty.
\end{align}
\end{lemma}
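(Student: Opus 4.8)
The plan is to recognize that the invariance property is precisely the statement that a certain nonnegative local martingale is a true martingale, and then to translate this martingale question into a non-explosion question for an $h$-transformed diffusion, which is exactly what Feller's test controls. First I would, in the scalar setting $D=(\alpha,\beta)$, apply It\^o's formula to $\exp(\int_0^tV(X_s^x)ds)\phi(X_t^x)$. Using the harmonicity ${\cal H}_V\phi={\cal A}\phi+V\phi=0$ the finite-variation part cancels, and one is left with
\begin{align*}
M_t:=\exp\left(\int_0^tV(X_s^x)ds\right)\phi(X_t^x),\qquad dM_t=M_t\,\frac{\phi'(X_t^x)}{\phi(X_t^x)}\sigma(X_t^x)\,dW_t,
\end{align*}
so $M=(M_t)_{t\in[0,T]}$ is a nonnegative local $(\Px,\Fx)$-martingale with $M_0=\phi(x)$. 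Being nonnegative it is a supermartingale, hence $\Ex[M_t]\le\phi(x)$, with equality for every $(t,x)$ if and only if $M$ is a genuine martingale. Since $\Ex[M_t]=\phi(x)$ for all $(t,x)\in[0,T]\times D$ is exactly the definition of $\phi$ being an invariant function of the semigroup generated by ${\cal H}_V$, the lemma reduces to showing that $M$ is a true martingale if and only if the conditions \eqref{eq:cond1deigen} hold.

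To analyze the martingale property I would pass to the Doob $h$-transform via localization. Along stopping times $\tau_k:=\inf\{t:X_t^x\notin(\alpha_k,\beta_k)\}$ for compact subintervals $(\alpha_k,\beta_k)\uparrow(\alpha,\beta)$, each stopped process $(M_{t\wedge\tau_k})_{t\in[0,T]}$ is a bounded true martingale, so $\frac{d\Qx^k}{d\Px}|_{\F_{t\wedge\tau_k}}=M_{t\wedge\tau_k}/\phi(x)$ defines a consistent family of measures under which, by Girsanov, $X^x$ satisfies an SDE with the shifted drift $b+\sigma^2\phi'/\phi$ and unchanged diffusion $\sigma$; this is precisely the transformed diffusion $\overline X$ of \eqref{eq:SDEbarX} in the present scalar case, where $\overline\sigma=\sigma\phi'/\phi$. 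Because {\Ab} guarantees $\tau_k\to\infty$ (indeed past $T$) under $\Px$, monotone convergence gives $\Ex[M_t]=\phi(x)\lim_{k\to\infty}\Qx^k(\tau_k>t)$, and the right-hand limit is $\phi(x)$ times the probability that $\overline X$ stays inside $(\alpha,\beta)$ up to time $t$. Hence $M$ is a true martingale for every $t\in[0,T]$ if and only if the transformed diffusion $\overline X$ is non-explosive, i.e.\ reaches neither boundary $\alpha$ nor $\beta$ in finite time.

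It then remains to apply Feller's test for explosion (e.g.\ Theorem~5.5.29 in \cite{karatzasshreve1991}) to $\overline X$, whose drift is $\overline b(x):=b(x)+\sigma^2(x)\phi'(x)/\phi(x)$. Its scale density is
\begin{align*}
\exp\left(-\int_{x_0}^x\frac{2\overline b(z)}{\sigma^2(z)}dz\right)=\frac{\phi^2(x_0)}{\phi^2(x)}\exp\left(-\int_{x_0}^x\frac{2b(z)}{\sigma^2(z)}dz\right),
\end{align*}
using $\int_{x_0}^x 2\phi'/\phi\,dz=2\log(\phi(x)/\phi(x_0))$; the reciprocal speed density acquires the compensating factor $\phi^2(y)/(\phi^2(x_0)\sigma^2(y))\exp(\int_{x_0}^y 2b/\sigma^2)$. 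Substituting both into the Feller integrals near $\alpha$ and $\beta$, the constants $\phi^2(x_0)$ cancel and one recovers exactly the two divergence conditions displayed in \eqref{eq:cond1deigen} (up to an irrelevant factor $2$). Thus non-explosion at each boundary is equivalent to the corresponding integral being infinite, which closes the equivalence.

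The step I expect to be the main obstacle is making the reduction in the second paragraph fully rigorous, since $M$ may be a \emph{strict} local martingale and one cannot define a global measure $\Qx$ by Girsanov before knowing $M$ is a true martingale. The careful route is the localized construction above: build the $h$-transformed diffusion $\overline X$ directly from its generator, decide its conservativeness by Feller's test, and only then identify $\Qx^k(\tau_k>t)$ with $\Ex[M_{t\wedge\tau_k}\,\I_{\{\tau_k>t\}}]/\phi(x)$, so that loss of mass in $M$ corresponds precisely to $\overline X$ attaining the boundary. Controlling this boundary behaviour under the minimal regularity of {\Aa}--{\Ab} (only local Lipschitz/H\"older coefficients, with no global growth hypotheses) is the delicate point, and it is handled exactly by the integrability criteria \eqref{eq:cond1deigen}.
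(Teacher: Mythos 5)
The paper offers no proof of this lemma at all: it is imported verbatim, with attribution, as Theorem 5.1.8 of \cite{pinsky95}, so there is no internal argument to compare yours against line by line. What you have written is in effect a reconstruction of the proof of the cited theorem, and it follows the standard route (which is also Pinsky's): equate invariance of $\phi$ with the true-martingale property of the nonnegative local martingale $M_t=\exp\left(\int_0^tV(X_s^x)ds\right)\phi(X_t^x)$ (a supermartingale with constant expectation is a martingale), equate the loss of mass of $M$ with the explosion probability of the Doob $h$-transformed diffusion with drift $b+\sigma^2\phi'/\phi$ via localization and Girsanov, and then decide explosion by Feller's test. Your computation of the transformed scale and speed densities is correct: the factors $\phi^2(x_0)$ cancel and the Feller integrals at $\alpha$ and $\beta$ become exactly the two conditions in \eqref{eq:cond1deigen}, up to an immaterial factor $2$. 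Two points are glossed over and deserve explicit mention, though both are standard. First, Feller's test (Theorem 5.5.29 in \cite{karatzasshreve1991}) characterizes non-explosion on the infinite horizon, $\Px(S=\infty)=1$ for the explosion time $S$ of $\overline X$, whereas the martingale property of $M$ on $[0,T]$ concerns $\Px(S>T)=1$ for a fixed $T$; to pass between the two you need the dichotomy for regular one-dimensional diffusions that either $S=\infty$ a.s.\ or $\Px(S\le t)>0$ for every $t>0$. Second, under assumption {\Aa}-(ii) the transformed drift $b+\sigma^2\phi'/\phi$ is in general only locally H\"older, so $\overline X$ cannot be produced as a strong solution; it should be constructed as the $h$-process (equivalently, through the martingale problem, where uniqueness in law up to explosion holds for non-degenerate one-dimensional diffusions with continuous coefficients). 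You flag this second issue yourself and indicate the correct fix, so I would classify both as presentational gaps in an otherwise sound argument rather than errors.
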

Lemma~\ref{lem:Pinsky} can be used to verify the martingale property of $M^y=(M_t^y)_{t\in[0,T]}$ defined by \eqref{eq:Mtmart} in some one-dimensional cases. We next provide examples in the one-dimensional case by applying Lemma~\ref{lem:Pinsky}.
\begin{example}\label{exam:one-dimOU}
Consider $m=n=1$, $b(x)=\kappa(\theta-x)$, and $\sigma(x)=\sigma$ for $x\in D:=\R$, where $\theta,\kappa\in\R$ with $\kappa\neq0$ and $\sigma>0$. Then, assumptions {\Aa} and {\Ab} are satisfied. In this case, the process $X^{t,x}=(X_s^{t,x})_{s\in[t,T]}$ given by \eqref{eq:SDEXtx} satisfies that, for all $(t,x)\in[0,T]\times\R$,
\begin{align}\label{eq:OU}
dX_s^{t,x} = \kappa(\theta-X_s^{t,x})ds + \sigma dW_s,\quad X_t^{t,x}=x\in\R.
\end{align}
The fitness function is given by $g(x)=-x$ for $x\in\R$. We next compute $\Ex[\exp(-\int_0^tX_s^{0,x}ds)]$ for $(t,x)\in[0,T]\times\R$. In fact, we have that
\begin{align*}
\int_0^tX_s^{0,x}ds&=(1-e^{-\kappa t})(x-\kappa^{-1}\theta)+\kappa\theta t+\sigma W_t-\sigma e^{-\kappa t}\int_0^te^{\kappa s}dW_s.
\end{align*}
This yields that $\int_0^tX_s^{0,x}ds\sim N({\mu}(t,x),\sigma^2(t))$ for $t\in(0,T]$, where $\mu(t,x):=(1-e^{-\kappa t})(x-\kappa^{-1}\theta)+\kappa\theta t$ and
$\sigma^2(t):=\sigma^2t+\frac{\sigma^2}{2\kappa}(1-e^{-2\kappa t})-\frac{2\sigma^2}{\kappa}(1-e^{-\kappa t})$. Therefore, for $t\in[0,T]$,
\begin{align}\label{eq:expouint}
\Ex\left[\exp\left(-\int_0^tX_s^{0,x}ds\right)\right]&=e^{-\mu(t,x)+\frac{1}{2}\sigma^2(t)}.
\end{align}
Moreover, it results in
\begin{align*}
\int_{\R}\Ex\left[\exp\left(-\int_0^tX_s^{0,x}ds\right)\right]\rho_0(dx)=e^{\frac{\sigma^2(t)}{2}+(1-e^{-\kappa t})\kappa^{-1}\theta-\kappa\theta t}\int_{\R}e^{-(1-e^{-\kappa t})x}\rho_0(dx).
\end{align*}
This implies that if the initial data $\rho_0$ of the FPK equation \eqref{eq:FPKeqn} satisfies that
\begin{align}\label{eq:condrho0ou}
\int_{\R}e^{-(1-e^{-\kappa t})x}\rho_0(dx)<\infty,
\end{align}
then the  assumption {\Ag} is satisfied.

Define $V(x):=g(x)+\lambda=\lambda-x$ for $x\in\R$. Thus, the characterize equation \eqref{eq:eigeneqn} becomes ${\cal H}_V\phi=0$. For $\lambda\in\R$, let us define
\begin{align}\label{eq:mulambda}
\mu(\lambda):=\left\{
  \begin{array}{ll}
    \displaystyle \frac1\kappa\left(\lambda-\theta+\frac{\sigma^2}{2\kappa^2}\right), & \kappa>0; \\ \\
    \displaystyle \frac1{|\kappa|}\left(\lambda-\theta+\frac{\sigma^2}{2\kappa^2}+\kappa\right), & \kappa<0.
  \end{array}
\right.
\end{align}
We take $\lambda$ so that $\mu(\lambda)\leq0$. Then, together with Lemma~{\rm\ref{lem:Pinsky}}, by applying Proposition {\rm 6.2} and Theorem {\rm 6.2} in \cite{qinLinetsky16}, the associated semigroup generated by ${\cal H}_V$ (depending on $\lambda$) has invariant functions given by:
\begin{itemize}
  \item[{\rm(i)}] for $\mu(\lambda)<0$, the invariant functions are of the form:
\begin{align}\label{eq:invariantfcn1}
  \phi_\lambda(x)=C_1e^{\frac{z(x)^2\epsilon}4}E_\mu(z(x)-\alpha)+C_2e^{\frac{z(x)^2\epsilon}4}E_\mu(\alpha-z(x)),\quad C_1,C_2>0,
\end{align}
where $\alpha:=\sigma\sqrt{\frac2{|\kappa|^3}}$, $z(x):=\frac{\sqrt{2|\kappa|}}\sigma(\theta-x)$, and $\epsilon:=\text{\rm sign}(\kappa)$.
\item[{\rm(ii)}] for $\mu(\lambda)=0$, the invariant functions are of the form:
\begin{align}
  \phi_\lambda(x)=C_1e^{\frac{z(x)^2\epsilon}4}E_\mu(z(x)-\alpha),\quad C_1>0.
\end{align}
\end{itemize}
Here, $E_\mu(\cdot)$ denotes the Weber parabolic cylinder function. Noting that the following recurrence relations on the Weber parabolic cylinder function hold:
\begin{align*}
E_{\mu+1}(z)-zE_{\mu}(z)+\mu E_{\mu-1}(z)=0,\quad E'_\mu(z)+\frac12zE_\mu(z)-\mu E_{\mu-1}(z)=0.
\end{align*}
Then, we arrive at the following relations given by
\begin{align}\label{Eq:Emurelations}
\begin{cases}
\displaystyle \frac d{dz}\left(e^{\frac {z^2}4}E_{\mu}(z-\alpha)\right)=\frac\alpha2e^{\frac{z^2}4}E_\mu(z-\alpha)+\mu e^{\frac{z^2}4}E_{\mu-1}(z-\alpha);\\ \\
\displaystyle \frac d{dz}\left(e^{\frac {z^2}4}E_{\mu}(\alpha-z)\right)=\frac\alpha2e^{\frac{z^2}4}E_\mu(\alpha-z)-\mu e^{\frac{z^2}4}E_{\mu-1}(z-\alpha);\\ \\
\displaystyle \frac d{dz}\left(e^{-\frac {z^2}4}E_{\mu}(z-\alpha)\right)=-\frac\alpha2e^{-\frac {z^2}4}E_{\mu}(z-\alpha)-e^{\frac {z^2}4}E_{\mu+1}(z-\alpha);\\ \\
\displaystyle \frac d{dz}\left(e^{-\frac {z^2}4}E_{\mu}(\alpha-z)\right)=-\frac\alpha2e^{-\frac {z^2}4}E_{\mu}(\alpha-z)+e^{\frac {z^2}4}E_{\mu+1}(\alpha-z).
\end{cases}
\end{align}
Therefore, the dynamic of $\overline{X}^y_t$ given in the condition (ii) of Theorem~\ref{thm:mutsol3} are respectively as follows in terms of the sign of $\kappa$:
\begin{align*}
\begin{cases}
\displaystyle d\overline{X}_t^y=b^{+}(\overline{X}_t^y)+\sigma dW_t, & \kappa>0;\\ \\
\displaystyle d\overline{X}_t^y=b^{-}(\overline{X}_t^y)+\sigma dW_t, & \kappa<0.
\end{cases}
\end{align*}
The drift coefficient functions in the above SDEs are given by:
\begin{align*}
\overline{b}^{+}(x)&:=\kappa\theta-\frac{\sigma^2}\kappa-\kappa x+\sqrt{2|\kappa|}\mu\sigma\frac{C_2E_{\mu-1}
  \left(\alpha-z\left(x\right)\right)-C_1E_{\mu-1}\left(z(x)-\alpha\right)}
  {C_2E_\mu\left(\alpha-z\left(x\right)\right)+C_1E_\mu\left(\alpha-z\left(x\right)\right)};\\
\overline{b}^{-}(x)&:=\kappa\theta+\frac{\sigma^2}\kappa-\kappa x+\sqrt{2|\kappa|}\sigma
  \frac{C_1E_{\mu+1}\left(z(x)-\alpha\right)-C_2E_{\mu+1}\left(\alpha-z(x)\right)}
  {C_2E_\mu\left(\alpha-z(x)\right)+C_1E_\mu\left(\alpha-z(x)\right)}.
\end{align*}
Define $W^y_t:=\sigma^{-1}y+W_t$, $N_{t}^{y,\pm}:=\exp(\int_0^t\sigma^{-1}b^{\pm}(\sigma W^y_s)dW^y_s-\frac{1}{2\sigma^2}\int_0^tb^{\pm}(\sigma W^y_s)^2ds)$ and $\overline{Y}_t^y:=\sigma^{-1}\overline{X}_t^y$ for $t\in[0,T]$. Using Exercise 5.5.38 in \cite{karatzasshreve1991}, it follows that, for all $t\in(0,T]$ and $z\in\R$,
\begin{align*}
\Px(\overline{Y}_t^{y}\leq z)=\Ex\left[N_t^{y,{\rm sgn}(\kappa)}{\I}_{W^y_t\leq z}\right]=\int_{-\infty}^{z}\Ex\left[N_{t}^{y,{\rm sgn}(\kappa)}\Big|W^y_t=r\right]p_0\left(t,\frac{y}{\sigma};r\right)dr,
\end{align*}
where $p_0(t,y;x):=\frac{1}{\sqrt{2\pi t}}\exp(-\frac{(x-y)^2}{2t})$ for $(t,x,y)\in(0,T]\times\R^2$. Therefore, for all $(t,x,y)\in(0,T]\times\R^2$,
\begin{align*}
\Px(\overline{X}_t^{y}\leq x)&=\Px\left(\overline{Y}_t^{y}\leq \frac{x}{\sigma}\right)
=\int_{-\infty}^{\frac{x}{\sigma}}\Phi^{{\rm sgn}(\kappa)}(t,r,y)p_0\left(t,\frac{y}{\sigma};r\right)dr.
\end{align*}
Here $\Phi^{\pm}(t,r,y):=\Ex[N_{t}^{y,\pm}|W^y_t=r]$ for all $(t,r,y)\in(0,T]\times\R^2$. Then, the density function $\overline{p}(t,y;x)$ of $\overline{X}_t^{y}$ for $t\in(0,T]$ is given by, for $(t,x,y)\in(0,T]\times\R^2$,
\begin{align}\label{eq:barpxytexamOU}
\overline{p}(t,y;x)=\frac{1}{\sigma}\Phi^{{\rm sgn}(\kappa)}\left(t,\frac{x}{\sigma},y\right)p_0\left(t,\frac{y}{\sigma};\frac{x}{\sigma}\right).
\end{align}
Then, by the solution representation \eqref{eq:densityutx} in Theorem \ref{thm:mutsol3}, and \eqref{eq:condrho0ou}, for any probability density function $u_0$ satisfying $\int_{\R}e^{-(1-e^{-\kappa t})x}u_0(x)dx<\infty$, we have the solution of the extended RM equation \eqref{eq:weak-EM-eqn} given by
\begin{align*}
u(t,x)=\frac{\frac{1}{\phi_{\lambda}(x)}\int_{-\infty}^{\infty}\phi_{\lambda}(y)\overline{p}(t,y;x)u_0(y)dy}
{\int_{-\infty}^{\infty}\frac{\int_{-\infty}^{\infty}\phi(y)\overline{p}(t,y;z)u_0(y)dy}{\phi_{\lambda}(z)}dz},\quad (t,x)\in[0,T]\times\R.
\end{align*}
\end{example}

\begin{example}\label{exam:one-dimCIR}
Consider $m=n=1$, $b(x)=a+bx$, and $\sigma(x)=\sigma\sqrt{x}$ for $x\in D:=(0,\infty)$, where $a,\sigma>0$ and $b\in\R$. Let the Feller condition hold, i.e., $2a\geq\sigma^2$. Then, assumptions {\Aa} and {\Ab} are satisfied. In this case, the process $X^{t,x}=(X_s^{t,x})_{s\in[t,T]}$ given by \eqref{eq:SDEXtx} satisfies that, for all $(t,x)\in[0,T]\times D$,
\begin{align}\label{eq:CIR}
dX_s^{t,x} = (a+bX_s^{t,x})ds + \sigma\sqrt{X_s^{t,x}}dW_s,\quad X_t^{t,x}=x\in D.
\end{align}
The fitness function is given by $g(x)=-x$ for $x\in D$. Then, the assumption {\Ag} holds. Define $V(x):=g(x)+\lambda=\lambda-x$ for $x\in D$. Thus, the characterize equation \eqref{eq:eigeneqn} becomes ${\cal H}_V\phi=0$. Together with Lemma~{\rm\ref{lem:Pinsky}}, by applying Proposition {\rm 6.1} and Theorem {\rm 6.1} in \cite{qinLinetsky16}, for $\lambda\leq\lambda_0:=\frac{a(\sqrt{b^2+2\sigma^2}+b)}{\sigma^2}$, the semigroup generated by ${\mathcal H}_V$ (depending on $\lambda$) has positive invariant functions $\phi_\lambda$ with the form given by, for $x>0$,
\begin{align}\label{eq:invKummer}
  \phi_\lambda(x)=C_1e^{\frac{\kappa-\gamma}{\sigma^2}x}K\left(\alpha,\beta,\frac{2\gamma}{\sigma^2}x\right),\quad C_1>0.
\end{align}
Here $\kappa=-b$, $\gamma=\sqrt{\kappa^2+2\sigma^2}$, $\alpha=\frac{\lambda-\lambda_0}\gamma$, $\beta=\frac{2a}{\sigma^2}$, and $K(\cdot,\cdot,\cdot)$ is the Kummer confluent hypergeometric function. By plugging \eqref{eq:invKummer} into \eqref{eq:SDEbarX} and \eqref{eq:densityutx}, we obtain the dynamic of $\overline{X}^y_t$ given in the condition (ii) of Theorem~\ref{thm:mutsol3} as follows:
\begin{align}\label{eq:barXyCIR}
  d\overline{X}_t^y=\left\{a-\gamma \overline{X}^y_t+\frac{2\alpha\gamma}\beta\frac{K\left(\alpha+1,\beta+1,\frac{2\gamma}{\sigma^2}\overline{X}^y_t\right)}
  {K\left(\alpha,\beta,\frac{2\gamma}{\sigma^2}\overline{X}^y_t\right)}\overline{X}^y_t\right\}dt+\sigma\sqrt{\overline{X}^y_t}dW_t.
\end{align}
For $(t,x,y)\in[0,T]\times(0,\infty)^2$, let $\overline{p}(t,y;x)$ be the transition density function of $\overline{X}_t^y$ (note that $\overline{p}(t,y;x)$ does not admit a closed-form in general). Then, by \eqref{eq:densityutx} of Theorem \ref{thm:mutsol3}, for any probability density function $u_0(x)$ on $x\in(0,\infty)$, we have the solution of the extended RM equation \eqref{eq:weak-EM-eqn} given by
\begin{align*}
u(t,x)=\frac{\frac{1}{\phi_{\lambda}(x)}\int_{0}^{\infty}\phi_{\lambda}(y)\overline{p}(t,y;x)u_0(y)dy}{\int_{0}^{\infty}
\frac{\int_{0}^{\infty}\phi_{\lambda}(y)\overline{p}(t,y;z)u_0(y)dy}{\phi_{\lambda}(z)}dz},\quad (t,x)\in[0,T]\times(0,\infty).
\end{align*}
\end{example}

\begin{example}\label{exam:polynomialcff}
Consider $m=n=1$, $b(x)\equiv0$ and $\sigma(x)\equiv\sqrt{2}\sigma$ for $x\in D=\R$, where $\sigma>0$. Then, assumptions {\Aa} and {\Ab} are satisfied. In this case, the process $X^{t,x}=(X_s^{t,x})_{s\in[t,T]}$ given by \eqref{eq:SDEXtx} satisfies that, for all $(t,x)\in[0,T]\times\R$,
\begin{align}\label{eq:SDEXpoly}
X_s^{t,x} = y + \sqrt{2}\sigma (W_s-W_t),\quad s\in[t,T].
\end{align}
Here, we consider the polynomial confining fitness function of degree $2q$ which is introduced in \cite{alfaroveruete19}, i.e., for a given $q\in\N$,
\begin{align}\label{eq:polynomialconfitfcn}
g(x) = -x^{2q} + \sum_{l=0}^{2q-1}\alpha_lx^l,\quad x\in\R,
\end{align}
where the sequence of constants $\alpha_l$ with $l=0,\ldots,2q-1$. Then, the resulting equation~\eqref{eq:RM-eqn-Rn} becomes the integro-differential Cauchy problem studied in \cite{alfaroveruete19}. Note that the operator ${\cal A}$ acted on $C^2(\R)$ is given by
\[
{\cal A}f(x)=\sigma^2f''(x),\quad x\in\R.
\]
Then, the eigenpair $(\lambda,\phi)$ of the characteristic equation \eqref{eq:eigeneqn} is related to the spectral basis of Schr\"odinger operator $-{\cal A}-g$ with confining potential $-g$. By virtue of \eqref{eq:polynomialconfitfcn}, it can be seen that $g:\R\to\R$ is continuous and $\lim_{|x|\to\infty}g(x)=-\infty$. Then, $-{\cal A}-g$ has discrete spectrum, i.e., there is an orthonormal basis $(\phi_k)_{k\geq1}\subset L^2(\R)$ such that $({\cal A}+g)\phi_k=-\lambda_k\phi_k$ and~$\|\phi_k\|_{L^2(\R)}=1$, with corresponding eigenvalues
\[
\lambda_0<\lambda_1\leq\lambda_2\leq\cdots\leq\lambda_k\to\infty
\]
of finite multiplicity. Moreover, Proposition~{\rm 2.6} in \cite{alfaroveruete19} gives that
\[
\|\phi_k\|_{L^{\infty}(\R)}\leq Ck^{\frac{q}{2(q+1)}},\quad k\geq1.
\]
Up to subtracting a constant to $g$, we can assume without loss of generality that $g\leq0$. This yields the assumption {\Ag}. Recall the local martingale $M^y$ defined by \eqref{eq:Mtmart} with the eigenpair $(\lambda_k,\phi_k)$ for $k\geq1$. Then, it holds that, $M_0^y=1$ and for $t\in(0,T]$,
\begin{align*}
  dM_t^{k,y} =\frac{\sqrt2\sigma}{\phi_k(y)}\exp\left(\lambda t+\int_0^tg(X_s^y)ds\right)\phi'_k(X_t^y)dW_t.
\end{align*}
Note that, the normalized eigenfunction $\phi_k$ for $k\geq1$ satisfies that
\begin{align}
  \sigma^2\int_{\R}|\phi'_k(x)|^2dx-\int_{\R}g(x)|\phi_k(x)|^2dx=\lambda_k,
\end{align}
this yields that $\phi'_k(\pm\infty)=0$. Therefore $\phi'_k$ for $k\geq1$ is bounded on $\R$. This implies that the local martingale $M^{k,y}$ defined by \eqref{eq:Mtmart} with the eigenpair $(\lambda_k,\phi_k)$ is a $(\Px,\Fx)$-martingale. In other words, the condition (i) of Theorem~\ref{thm:mutsol3} holds.

For $k\geq1$ and $y\in\R$, consider the following SDE given by
\begin{align}\label{eq:SDEbarXk}
 \overline{Y}_t^{k,y} = \frac{y}{\sqrt{2}\sigma}+ \int_0^t\overline{\sigma}_k(\sqrt{2}\sigma\overline{Y}_s^{k,y})ds + W_t,\quad t\in[0,T],
\end{align}
where $\overline{\sigma}_k(x):=\sqrt{2}\frac{\phi_k'(x)}{\phi_k(x)}\sigma$ for $x\in\R$. Then, the process $\overline{X}^{k,y}=(\overline{X}^{k,y}_t)_{t\in[0,T]}$ given in Theorem~\ref{thm:mutsol3}-(ii) is given by
\[
\overline{X}_t^{k,y}=\sqrt{2}\sigma\overline{Y}_t^{k,y},\quad t\in[0,T].
\]
By applying Exercise 5.5.38 in \cite{karatzasshreve1991}, we arrive at, for all $t\in(0,T]$ and $z\in\R$,
\begin{align*}
\Px(\overline{Y}_t^{k,y}\leq z)=\Ex\left[N_t^{k,y}{\I}_{W^y_t\leq z}\right]=\int_{-\infty}^{z}\Ex\left[N_t^{k,y}\Big|W^y_t=r\right]p_0\left(t,\frac{y}{\sqrt{2}\sigma};r\right)dr,
\end{align*}
where $W^y_t=\frac{y}{\sqrt{2}\sigma}+W_t$ and $N_t^{k,y}:=\exp(\int_0^t\overline{\sigma}_k(\sqrt{2}\sigma W^y_s)dW^y_s-\frac{1}{2}\int_0^t\overline{\sigma}_k^2(\sqrt{2}\sigma W^y_s)ds)$ for $(t,y)\in[0,T]\times{\mathbb R}$. Here $p_0(t,y;x):=\frac{1}{\sqrt{2\pi t}}\exp(-\frac{(x-y)^2}{2t})$ for $(t,x,y)\in(0,T]\times\R^2$. Therefore, for all $(t,x,y)\in(0,T]\times\R^2$,
\begin{align*}
\Px(\overline{X}_t^{k,y}\leq x)&=\Px\left(\overline{Y}_t^{k,y}\leq \frac{x}{\sqrt{2}\sigma}\right)
=\int_{-\infty}^{\frac{x}{\sqrt{2}\sigma}}\Phi_k(t,r,y)p_0\left(t,\frac{y}{\sqrt{2}\sigma};r\right)dr.
\end{align*}
Here $\Phi_k(t,r,y):=\Ex[N_t^{k,y}|W^y_t=r]$ for all $(t,r,y)\in(0,T]\times\R^2$. Then, the density function $\overline{p}_k(t,y;x)$ of $\overline{X}_t^{k,y}$ for $t\in(0,T]$ exists, and it is given by, for $(t,x,y)\in(0,T]\times\R^2$,
\begin{align}\label{eq:barpxytexam}
\overline{p}_k(t,y;x)=\frac{1}{\sqrt{2}\sigma}\Phi_k\left(t,\frac{x}{\sqrt{2}\sigma},y\right)p_0\left(t,\frac{y}{\sqrt{2}\sigma};\frac{x}{\sqrt{2}\sigma}\right).
\end{align}
Then, for $k\geq1$, the function defined below
\begin{align}\label{eq:densityutxexam}
u_k(t,x)=\frac{\frac{1}{\phi_k(x)}\int_{\R}\phi_k(y)\overline{p}_k(t,y;x)u_0(y)dy}{\int_{\R}\frac{\int_{\R}\phi_k(y)\overline{p}_k(t,y;z)u_0(y)dy}{\phi_k(z)}dz},\quad (t,x)\in[0,T]\times\R
\end{align}
is a (weak) solution of of the extended RM equation~\eqref{eq:weak-EM-eqn}. If the uniqueness of solutions of the extended RM equation~\eqref{eq:weak-EM-eqn} holds, then $u_k=u_l$ for $k\neq l$. The uniqueness of solutions of the extended RM equation~\eqref{eq:weak-EM-eqn} will be implied by the propagation of chaos which will be established in Section~\ref{sec:propagation}.
\end{example}

The example with multi-dimensional fitness space and quadratic fitness function is provided below:
\begin{example}\label{exam:affineprocess}
Consider $m=n\geq1$, $b(x)=b+ Bx$, and $\sigma(x)\equiv\sigma$ for $x\in D:=\R^n$, where $b\in\R^{n},B\in\R^{n\times n}$ and $\sigma\in\R^{n\times n}$ which is an invertible matrix, so that $a:=\sigma\sigma^{\top}$ is strictly positive definite. The fitness function is given by $g(x)=-r(x)$ for $x\in\R^n$, and
\begin{align}\label{eq:fitness-g-2}
r(x)=\alpha+ \delta^{\top}x+ x^{\top}Gx,\quad x\in\R^n,
\end{align}
where $\alpha\in\R$, $\delta\in\R^{n}$, and $G\in\R^{n\times n}$ is a symmetric positive semi-definite matrix. They are taken to be such that $r(x)$ is non-negative for all $x\in\R^n$. Then, the assumption {\Ag} holds. Let $V=g(x)+\lambda=\lambda-r(x)$ for $x\in\R^n$, then the characteristic equation \eqref{eq:eigeneqn} becomes ${\cal H}_V\phi=0$.
Let $H\in\R^{n\times n}$ satisfies the following continuous-time algebraic Riccati equation:
\begin{align}\label{eq:CTARE}
2HaH-B^{\top}H-HB-G=0,
\end{align}
and $v\in\R^n$ satisfies the following linear equation:
\begin{align}\label{eq:u-lineareqn}
2HaH -B^{\top}v-2Hb-\delta=0.
\end{align}
By Section 6.2.2 of \cite{qinLinetsky16}, the eigenpair $(\lambda,\phi)$ of the characteristic equation \eqref{eq:eigeneqn} which satisfies the condition (i) of Theorem \ref{thm:mutsol3} is given by
\begin{align}\label{eq:eigenpairexam}
(\lambda,\phi(x))=\left(\alpha-\frac{1}{2}v^{\top}av+{\rm tr}(aV)+v^{\top}b,\exp\left(-v^{\top}x-x^{\top}Hx\right)\right),\quad x\in\R^n,
\end{align}
where ${\rm tr}$ denotes the trance operator. Define $\overline{\sigma}(x)=-(v+2Hx)^{\top}\sigma$ for $x\in\R^n$. Therefore, the process $\overline{X}^y=(\overline{X}_t^y)_{t\in[0,T]}$ given in the condition (ii) of Theorem \ref{thm:mutsol3} satisfies that, for $y\in\R^n$,
\begin{align}\label{eq:SDEbarXexam}
    \overline{X}_t^y = y+ \int_0^t(b+B\overline{X}_s^y+\sigma\overline{\sigma}(\overline{X}_s^y)^{\top})ds + \sigma W_t.
\end{align}
Note that the process $\overline{X}^y$ is an $n$-dimensional affine process and hence the transition density function $\overline{p}(t,y;x)$ of $\overline{X}^y$ exists and the characteristic function of $\overline{p}(t,y;\cdot)$ has exponential-affine dependence on $y\in\R^n$, see Theorem 2.7 of \cite{duffieaffineaap}. In fact, we can rewrite the process $\overline{X}^y$ described as \eqref{eq:SDEbarXexam} as in the form given by, for $y\in\R^n$,
\begin{align*}
  \overline{X}_t^y=e^{\Gamma t}y+\int_0^te^{\Gamma(t-s)}\beta ds+\int_0^te^{\Gamma(t-s)}\sigma dW_s,\quad t\in[0,T],
\end{align*}
where $\beta:=b-\sigma\sigma^\top v$ and $\Gamma=B-2\sigma\sigma^\top H$. This yields that, for $t\in(0,T]$,
\begin{align*}
\overline{X}^y_t\sim N\left(\mu_t,\Sigma_t\right),
\end{align*}
where $\mu_t:=e^{\Gamma t}y+\int_0^te^{\Gamma(t-s)}\beta ds$ and $\Sigma_t:=\int_0^te^{\Gamma(t-s)}\sigma\sigma^\top(e^{\Gamma(t-s)})^\top ds$. Therefore, the transition density function of $\overline{X}_t^y$ is given by: for $(t,x,y)\in(0,T]\times\R^{2n}$,
\begin{align}\label{Eq:affineGaussian}
\overline{p}(t,y;x)=\frac1{\sqrt{(2\pi)^n{\rm det}(\Sigma_t)}}\exp\left(-\frac12(x-\mu_t)^\top\Sigma_t^{-1}(x-\mu_t)\right).
\end{align}
Then, by \eqref{eq:densityutx} of Theorem \ref{thm:mutsol3}, for any probability density function $u_0(x)$ on $x\in\R^n$, we have the solution of the extended RM equation \eqref{eq:weak-EM-eqn} given by
\begin{align*}
u(t,x)=\frac{\frac{1}{\phi(x)}\int_{\R^n}\phi(y)\overline{p}(t,y;x)u_0(y)dy}{\int_{\R^n}\frac{\int_{\R^n}\phi(y)\overline{p}(t,y;z)u_0(y)dy}{\phi(z)}dz},\quad (t,x)\in[0,T]\times\R^n.
\end{align*}
\end{example}

\section{Propagation of chaos}\label{sec:propagation}

The aim of this section is to establish the propagation of chaos of the FPK equation \eqref{eq:FPKeqn}. It is equivalent to the convergence of the empirical measure of a particle system to an arbitrary solution to the FPK equation \eqref{eq:FPKeqn} with respect to a suitable distance adapted to our probabilistic framework.

\subsection{Particle system}\label{sec:particlesys}

Building upon the probabilistic solution form of the extended RM equation discussed in Section~\ref{sec:FPKequationexample}, we construct a suitable coupling between a particle system and the realization of the solution to the FPK equation \eqref{eq:FPKeqn}.

Before introducing the state dynamics of the particle system proposed in this section, we first impose the following assumption on the fitness function used in this section:
\begin{description}
  \item[{\Agl}] (i) the fitness function $g:D\to\R$ is continuous and bounded from above; (ii) there exists a polynomial $Q_g:\R_+\to\R_+$ such that
  \begin{align}\label{eq:assumg-poly}
  |g(x)-g(y)|\leq Q_g(|x|+|y|)|x-y|,\quad {\rm for~all}~x,y\in D.
  \end{align}
\end{description}
Note that the condition (i) on the boundedness from above of $g$ in the assumption {\Agl} implies the assumption {\Ag}. It can be seen that the fitness functions in the examples presented in Section~\ref{sec:FPKequationexample} satisfy the condition \eqref{eq:assumg-poly} in the assumption {\Agl}. We next describe the particle system. More precisely, applying the solution representation \eqref{eq:solutionmut} of the FPK equation given in Lemma~\ref{lem:solutionFPK}, we introduce the following particle system as follows: let the number of particles be given by $N\geq1$. For $i=1,\ldots,N$, the dynamics of the state process of the $i$-th particle is given by, for $t\in[0,T]$,
\begin{align}\label{eq:particlesXi}
dX_t^i = b(X_t^i)dt + \sigma(X_t^i)dW_t^i,\quad X_0^i\in D,
\end{align}
where $W^i=(W_t^i)_{t\in[0,T]}$, $i=1,\ldots,N$ and $W=(W_t)_{t\in[0,T]}$ are independent ($m$-dimensional) Brownian motions under the filtered probability space $(\Omega,\F,\Fx,\Px)$. Here, the initial values $(X_0^i)_{i\geq1}$ is assumed to satisfy that
\begin{description}
  \item[{\Ax}] For $q\geq2$, the sequence of r.v.s $(X_0^i)_{i\geq1}$ is i.i.d. according to the probability distribution $\rho_0\in{\cal P}_{({\rm deg}(Q_g)+1)q}(D)$, where ${\rm deg}(Q_g)$ denotes the degree of the polynomial $Q_g$ which is given in the assumption {\Agl}.
\end{description}
We also strengthen the assumption on the coefficients $(b,\sigma)$ of SDE~\eqref{eq:SDEXtx} imposed in {\Aa} used in the previous section as:
\begin{description}
  \item[{\Absig}]$b:D\to\R^{n\times1}$ and $\sigma:D\to\R^{n\times m}$ are Lipschitiz continuous with linear growth.
\end{description}
The current assumption {\Absig} can yield the assumption {\Ab}, and moreover, under the assumption {\Ax}, for $q\geq2$, there exists a constant $C=C_{g,q,b,\sigma,T}>0$ such that
\begin{align}\label{eq:momentqq}
\Ex\left[\sup_{t\in[0,T]}\left|X_t^i\right|^{({\rm deg}(Q_g)+1)q}\right]\leq C\left\{1+\Ex\left[|X_0^i|^{({\rm deg}(Q_g)+1)q}\right]\right\}<\infty,
\end{align}
see also Chapter 5 in \cite{karatzasshreve1991}.

Motivated from the solution representation \eqref{eq:solutionmut} of the FPK equation \eqref{eq:FPKeqn} given in Lemma~\ref{lem:solutionFPK}, we introduce the following sequence of ${\cal P}(D)$-valued processes defined by, for $N\geq1$, and $t\in[0,T]$,
\begin{align}\label{eq:solutionmutN}
\mu_t^N = \frac{\frac{1}{N}\sum_{i=1}^N\delta_{X_t^i}\exp\left(\int_0^tg(X_s^i)ds\right)}{\frac{1}{N}\sum_{i=1}^N\exp\left(\int_0^tg(X_s^i)ds\right)},\qquad {\rm on}~{\cal B}_{D}.
\end{align}
Similarly to \eqref{eq:limitmu}, for $N\geq1$, the ${\cal P}(D)$-valued process $\mu^N=(\mu_t^N)_{t\in[0,T]}$ also satisfies that, for $t\in[0,T]$,
\begin{align}\label{eq:empiricalmeasure}
\mu_t^N= \frac{1}{N}\sum_{i=1}^N\delta_{X_t^i}\exp\left(\int_0^t(g(X_s^i)-\lc\mu_s^N,g\rc)ds\right),\quad {\rm on}~{\cal B}_{D}.
\end{align}
From the representation \eqref{eq:empiricalmeasure}, $(\mu^N)_{N\geq1}$ is in fact a sequence of empirical measure-valued processes with an exponential weight.

\subsection{Metric between $\mu^N$ and $\mu$}

This section introduces an appropriate metric between $\mu^N$ and $\mu$ for establishing the propagation of chaos. We first give the following spaces. Let ${\cal P}^s(D)$ be the set of sub-probability measures on $\BRn$, i.e., for any $\mu\in{\cal P}^s(D)$, $\mu$ is a finite measure on $\BRn$ such that $\mu(D)\leq1$. We next introduce the (Alexandroff) one-point compactification. Add one point which is outside of $D$ to $D$ called ``$\star$'' and define $D_{\star}:=D\cup\{\star\}$. Let $D$ be topologized by a topology ${\cal T}$, and we then can define a topology ${\cal T}^{\star}$ for $D_{\star}$ as follows: (i) each open subset of $D$ is also in ${\cal T}^{\star}$, i.e., ${\cal T}\subset{\cal T}^{\star}$; (ii) for each compact set $C\subset D$, define an element $U_C\in{\cal T}^{\star}$ by $U_C:=(D\setminus C)\cup\{\star\}$. Let us define a bijection $\iota:{\cal P}^s(D)\to{\cal P}(D_{\star})$ as: for any $A\in{\cal B}(D_{\star})$,
\begin{align}\label{eq:iotamap}
(\iota\mu)(A) := \mu(A\cap D)+(1-\mu(D))\delta_{\star}(A),\quad \mu\in{\cal P}^s(D).
\end{align}
Then, the integral of $\mu\in{\cal P}^s(D)$ w.r.t. a measurable function $f:D_{\star}\to\R$ is defined as (if it is well-defined):
\begin{align}\label{eq:integral-star}
\int_{D_{\star}}f(x)(\iota\mu)(dx)=\int_{D}f(x)\mu(dx)+f(\star)(1-\mu(D))=\lc\mu,f\rc+f(\star)(1-\mu(D)).
\end{align}

Consider $\mu=(\mu_{t})_{t\in[0,T]}$ as an arbitrary ${\cal P}(D)$-valued solution of the FPK equation \eqref{eq:FPKeqn}, and we then define that
\begin{align}\label{eq:mutg}
\mu_t^g:=\exp\left(\int_0^t\lc\mu_s,g\rc ds\right)\mu_t,\quad t\in[0,T].
\end{align}
This implies that, for any test function $f\in C_0^{\infty}(D)$,
\begin{align}\label{eq:discountFPK}
\lc\mu_t^g,f\rc=\lc\rho_0,f\rc+\int_0^t\lc\mu_s^g,({\cal A}+g)f\rc ds,\quad t\in[0,T].
\end{align}
For the sequence of empirical measure-valued processes $(\mu^N)_{N\geq1}$ defined by \eqref{eq:solutionmutN}, similarly to \eqref{eq:mutg}, we can also define
\begin{align}\label{eq:mutgN}
\mu_t^{g,N}:=\exp\left(\int_0^t\lc\mu_s^N,g\rc ds\right)\mu_t^N,\quad t\in[0,T].
\end{align}
Hereafter, without loss of generality, we assume that $g(x)\leq0$ for all $x\in D$ by the assumption {\Agl}. Then, for any $t\in(0,T]$, it follows from \eqref{eq:mutg} and \eqref{eq:mutgN} that $\mu_t^g$ and $\mu_t^{g,N}$ are sub-probability measures on $\BRn$, i.e., $\mu_t^g,\mu_t^{g,N}\in{\cal P}^s(D)$ (note that $\mu_0^{g,N}=\mu^N$ and $\mu_0^g=\mu_0$ and hence $\mu_0^{g,N},\mu_0^g\in{\cal P}(D)$).

For the parameter $q\geq2$ given in the assumption {\Ax}, we next establish a metric $d_{q,T}$ between $\mu=(\mu_t)_{t\in[0,T]}$ and $\mu^N=(\mu_t^N)_{t\in[0,T]}$ given as follows:
\begin{align}\label{eq:dp}
d_{q,T}(\mu,\mu^N):= \left\{\Ex\left[\sup_{t\in[0,T]}d_{BL}(\iota\mu_t^{g,N},\iota\mu_t^{g})^q\right]\right\}^{1/q},
\end{align}
where 
$d_{BL}$ is Fortet-Mourier distance (see, e.g. Section 11.2 of \cite{Dudley2002}, page 390), which is in fact defined as:
\begin{align}\label{eq:FM-metric}
d_{BL}(\iota\mu_t^{g,N},\iota\mu_t^{g})=\sup_{\psi\in{\cal R}_1}\left(\int_{D_{\star}}\psi(x)(\iota\mu_t^{g,N}-\iota\mu_t^{g})(dx)\right).
\end{align}
Here, ${\cal R}_1$ is the set of (bounded) Lipschitzian continuous functions $\psi:D_{\star}\to\R$ satisfying $\|\psi\|_{\infty}+\|\psi\|_{\rm Lip}\leq1$ (where $\|\psi\|_{\infty}:=\sup_{x\in D_{\star}}|\psi(x)|$ and $\|\psi\|_{\rm Lip}$ denotes the Lipschitzian coefficient of $\psi$). Let $|\cdot|$ be the Euclidean norm. Then, we can define a metric $d_{\star}$ on $D_{\star}$ as in \cite{Mandelkern89}: fix $x_0\in D$ and define $l(x):=\frac{1}{1+|x-x_0|}$ for $x\in D$. For $x_1,x_2\in D$, define $d_{\star}(x_1,x_2):=|x_1-x_2|\wedge(l(x_1)+l(x_2))$, $d_{\star}(x,\star):=l(x)$ for $x\in D$, and $d_{\star}(\star,\star)=0$. Then, the Lipschitzian coefficient of $\psi$ (as a seminorm) is given by
\[
\|\psi\|_{\rm Lip}=\sup_{x_1\neq x_2,x_1,x_2\in D_{\star}}\frac{\left|\psi(x_1)-\psi(x_2)\right|}{d_{\star}(x_1,x_2)}.
\]
This implies that, for any $\psi\in{\cal R}_1$, and $x_1,x_2\in D$,
\begin{align}\label{eq:LipRn}
\left|\psi(x_1)-\psi(x_2)\right|\leq |x_1-x_2|\wedge(l(x_1)+l(x_2))\leq |x_1-x_2|.
\end{align}
In other words, $\psi\in{\cal R}_1$ is also a (bounded) Lipschitzian continuous function on $D$ with the Lipschitzian coefficient being less than one.

\subsection{Propagator of FPK equation}

This section establishes the propagator corresponding to the transformed FPK equation~\eqref{eq:discountFPK}, which is defined as: for $(t,x)\in[0,T]\times D$,
\begin{align}\label{eq:discountpropagator}
P_{t,T}^gf(x):=\Ex\left[f(X_{T}^{t,x})\exp\left(\int_t^Tg(X_s^{t,x})ds\right)\right],\quad f\in C_b(D),
\end{align}
where the process $X^{t,x}=(X_s^{t,x})_{s\in[t,T]}$ is the unique strong solution of SDE~\eqref{eq:SDEXtx}, i.e.,
\begin{align*}
X_s^{t,x} = x + \int_t^s b(X_r^{t,x})dr + \int_t^s\sigma(X_r^{t,x})dW_r,\quad s\in[t,T].
\end{align*}
The following lemma can be proved by verifying the conditions of Theorem 1 and Lemma 2 in \cite{healthSchweizer2000}.
\begin{lemma}\label{lem:solFKPhi}
Let assumptions {\Aa}, {\Ab} and {\Agl}{\rm-(i)} hold. Suppose also that
\begin{description}
\item[{\AD}] there exists a sequence $(D_k)_{k\in\N}$ of bounded domains with $\overline{D}_k\subset D$ such that $\bigcup_{k=1}^{\infty}D_k=D$, each $D_k$ has a $C^2$-boundary; and for each $k\geq1$, $\sigma\sigma^{\top}(x)$ is uniform elliptic on $\R^n$ for $(t,x)\in[0,T)\times D_k$.
\end{description}
Then, the propagator $P_{t,T}^gf$ defined by \eqref{eq:discountpropagator} satisfies that, for $(t,x)\in(0,T)\times D$,
\begin{align}\label{eq:backwardKol}
\partial_tP_{t,T}^gf(x) + ({\cal A}+g)P_{t,T}^gf(x)=0,\quad P_{T,T}^gf(x)=f(x).
\end{align}
The operator ${\cal A}$ is defined by \eqref{eq:operatorsLn}. Moreover, $P_{\cdot,T}^gf\in C^{1,2}((0,T]\times D)\cap C([0,T]\times D)$ and there exists a unique classical solution of the Cauchy problem~\eqref{eq:backwardKol}.
\end{lemma}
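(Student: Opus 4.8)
The plan is to show that the Feynman--Kac functional $P_{t,T}^gf$ defined by \eqref{eq:discountpropagator} is the unique classical solution of the backward Kolmogorov equation \eqref{eq:backwardKol} by invoking the existence/regularity machinery for linear parabolic Cauchy problems with a zeroth-order potential term, as packaged in Theorem~1 and Lemma~2 of \cite{healthSchweizer2000}. The key point is that \eqref{eq:discountpropagator} has exactly the form of a discounted (here, weighted) expectation $\Ex[f(X_T^{t,x})\exp(\int_t^T g(X_s^{t,x})ds)]$ to which that theorem applies, with the potential $g$ playing the role of the (signed) discount rate. So the whole proof is a verification that the hypotheses of that cited result hold under our standing assumptions {\Aa}, {\Ab}, {\Agl}{\rm-(i)} and {\AD}, followed by a localization argument to pass from the bounded subdomains $D_k$ to all of $D$.

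First I would record what the cited result requires: a nondegenerate (locally uniformly elliptic) diffusion matrix, sufficiently regular coefficients, a potential term bounded from above, and a domain with enough boundary regularity so that the interior Schauder/parabolic theory yields a $C^{1,2}$ solution that represents the stochastic functional. The assumption {\AD} supplies precisely the exhausting sequence $(D_k)$ of bounded $C^2$-domains on each of which $\sigma\sigma^\top$ is uniformly elliptic; {\Absig}/{\Aa} supplies the (local) Lipschitz/H\"older regularity of $b,\sigma$; {\Agl}{\rm-(i)} supplies continuity and the crucial upper bound $g\le \text{const}$, which guarantees that the exponential weight $\exp(\int_t^T g\,ds)$ is bounded and hence that $P_{t,T}^gf$ is finite and continuous for bounded $f$. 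Next I would apply Theorem~1 of \cite{healthSchweizer2000} on each $D_k$: on each such bounded domain with smooth boundary and a uniformly elliptic, H\"older-continuous generator, there is a unique classical solution $v_k\in C^{1,2}((0,T)\times D_k)\cap C([0,T]\times\overline{D}_k)$ of the terminal-value problem \eqref{eq:backwardKol}, and Lemma~2 of the same reference identifies this solution with the stochastic representation \eqref{eq:discountpropagator} via an It\^o/optional-stopping argument against the killing functional.

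The remaining, and main, step is to remove the auxiliary domain and conclude the statement on all of $D$. Here I would use the nonexplosion and non-exit assumption {\Ab}: for $(t,x)\in(0,T)\times D$ fixed, the exit time $\tau_k:=\inf\{s\ge t: X_s^{t,x}\notin D_k\}$ of the diffusion from $D_k$ increases to $T$ (or beyond) almost surely as $k\to\infty$, because {\Ab} says the solution neither explodes nor leaves $D$ before $T$. Applying It\^o's formula to $s\mapsto v_k(s,X_s^{t,x})\exp(\int_t^s g\,dr)$ up to the stopping time $\tau_k\wedge T$ shows, by dominated convergence (using the uniform upper bound on $g$ from {\Agl}{\rm-(i)} and the local boundedness of $v_k$), that the family $\{v_k\}$ is consistent on overlaps, hence defines a single function $v=P_{\cdot,T}^gf$ on $(0,T]\times D$; interior parabolic regularity on each $D_k$ then upgrades $v$ to $C^{1,2}((0,T]\times D)\cap C([0,T]\times D)$ and shows it solves \eqref{eq:backwardKol} pointwise on $D$. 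Uniqueness follows from uniqueness on each $D_k$ together with the same localization.

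The hard part will be the localization/patching step, not the interior theory: one must verify that the $C^{1,2}$ solutions on the $D_k$ genuinely agree on overlaps and that the limiting function retains $C^{1,2}$-regularity up to (but not including) the parabolic boundary $\{t=0\}$, which requires controlling the exponential weight and the growth of $f\circ X$ uniformly in $k$. This is exactly where {\Ab} (nonexplosion/non-exit) and the upper bound on $g$ in {\Agl}{\rm-(i)} are indispensable, since without them the stochastic representation could fail to be finite or the interior estimates could blow up as $D_k\uparrow D$.
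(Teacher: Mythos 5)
Your overall strategy --- reduce the lemma to verifying the hypotheses of Theorem 1 and Lemma 2 of \cite{healthSchweizer2000} --- is exactly what the paper does, and your dictionary between the paper's assumptions and that result's hypotheses is correct: {\AD} supplies the exhausting sequence of bounded $C^2$-subdomains with uniform ellipticity, {\Aa} and {\Ab} supply local regularity of $(b,\sigma)$ together with non-explosion and invariance of $D$, and {\Agl}-(i) makes the weight $\exp(\int_t^T g(X_s^{t,x})ds)$ bounded, so that $P_{t,T}^gf$ is finite and well-defined for $f\in C_b(D)$. The problem is your third paragraph, which you yourself flag as the ``main step'': it is both unnecessary and, as written, incorrect. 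Theorem 1 of \cite{healthSchweizer2000} is already the global statement on all of $D$ under purely \emph{local} hypotheses; the localization over the $D_k$ is internal to its proof, not something the user of the theorem must redo. Once the hypotheses are checked, the conclusion \eqref{eq:backwardKol}, the regularity, and the uniqueness are delivered directly, which is why the paper's proof is a single sentence.

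More importantly, the patching argument you sketch has a genuine hole. A terminal-value problem for $\partial_t v_k + ({\cal A}+g)v_k=0$ on a \emph{bounded} spatial domain $D_k$ is not well-posed without lateral boundary data on $(0,T)\times\partial D_k$, and you never specify it; with an arbitrary (say zero, i.e.\ killed) choice, the local solutions $v_k$ will not agree on overlaps, and neither It\^o's formula up to $\tau_k\wedge T$ nor dominated convergence repairs this for fixed $k$: the killed stochastic representation on $D_k$ differs from \eqref{eq:discountpropagator} precisely by the contribution of paths exiting $D_k$ before $T$, so your claimed identification with \eqref{eq:discountpropagator} fails. The actual argument (the one inside the proof of the cited theorem) runs the other way around: one first defines $v:=P_{\cdot,T}^gf$ \emph{globally} by \eqref{eq:discountpropagator}, proves it is continuous on $[0,T]\times D$ (this is what Lemma 2 there, via Feller-type continuity of $x\mapsto X^{t,x}$, delivers), then solves on each $D_k$ the Dirichlet problem whose lateral boundary data is $v$ itself, and identifies the resulting classical solution with $v$ on $D_k$ by It\^o's formula, optional stopping at the exit time of $D_k$, and the strong Markov property of $X$. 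Consistency across the $D_k$ is then automatic, because every local solution equals the one globally defined function $v$. Without that choice of boundary data and the Markov-property identification your consistency claim is circular; with it, your paragraph reproduces the published proof of the theorem you are citing, so the honest options are either to supply that argument in full or to simply invoke the theorem, as the paper does.
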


The propagator defined by \eqref{eq:discountpropagator} can be used to establish the following relation satisfied by $\mu_T^{g,N}-\mu_T^g$ for any fixed $T>0$:
\begin{lemma}\label{lem:propagatormugdiff}
Let the conditions of Lemma~\ref{lem:solFKPhi} hold. Then, for any fixed $T>0$, it holds that
\begin{align}\label{eq:propagationPtT}
\big\lc\mu_T^{g,N}-\mu_T^g,f\big\rc&= \big\lc\mu_0^{g,N}-\mu_0^g,P_{0,T}^gf\big\rc\nonumber\\
&\quad+\frac{1}{N}\sum_{i=1}^N\int_0^T\exp\left(\int_0^sg(X_r^i)dr\right)\nabla_xP_{s,T}^gf(X_s^i)^{\top}\sigma(X_s^i)dW_s^i,
\end{align}
where $P_{t,T}^gf$ for $t\in[0,T]$ is the propagator defined by~\eqref{eq:discountpropagator}.
\end{lemma}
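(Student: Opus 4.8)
The plan is to reduce the claimed identity \eqref{eq:propagationPtT} to two independent pieces linked by the propagator: a deterministic identity for the arbitrary FPK solution $\mu^g$, and an It\^o-formula computation for the explicit particle system, whose difference is exactly \eqref{eq:propagationPtT}. First I would record the simplification that, upon cancelling the exponential weights in \eqref{eq:mutgN} against those in \eqref{eq:empiricalmeasure}, the process $\mu^{g,N}$ is nothing but the unnormalized weighted empirical measure, i.e.
\[
\lc\mu_t^{g,N},f\rc = \frac{1}{N}\sum_{i=1}^N f(X_t^i)\exp\left(\int_0^t g(X_s^i)\,ds\right),\quad t\in[0,T].
\]
Throughout I would abbreviate $F(s,x):=P^g_{s,T}f(x)$. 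By Lemma~\ref{lem:solFKPhi} this function lies in $C^{1,2}((0,T]\times D)\cap C([0,T]\times D)$ and solves the backward Cauchy problem $\partial_s F+({\cal A}+g)F=0$ on $(0,T)\times D$ with $F(T,\cdot)=f$. The entire argument rests on using this PDE to annihilate the drift terms in both computations below.

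For the particle part I would fix $i$ and apply It\^o's formula to the product $F(s,X_s^i)\exp(\int_0^s g(X_r^i)\,dr)$ along the dynamics \eqref{eq:particlesXi}. Since the exponential factor has finite variation with differential $g(X_s^i)\exp(\int_0^s g(X_r^i)\,dr)\,ds$, and $X^i$ has generator ${\cal A}$, all $ds$-terms assemble into $\exp(\int_0^s g(X_r^i)\,dr)\,[\partial_s F+({\cal A}+g)F](s,X_s^i)$, which vanishes identically by the backward equation. Only the stochastic integral survives; integrating from $0$ to $T$, using $F(T,\cdot)=f$ and evaluating the factor at $s=0$, and then averaging over $i=1,\dots,N$ gives $\lc\mu_T^{g,N},f\rc=\lc\mu_0^{g,N},P^g_{0,T}f\rc$ plus precisely the stochastic integral appearing in \eqref{eq:propagationPtT}. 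As $F$ is only $C^{1,2}$ away from $s=0$, I would first run It\^o on $[\varepsilon,T]$ and let $\varepsilon\downarrow0$, using the continuity of $F$ on $[0,T]\times D$ and assumption {\Ab} guaranteeing the paths remain in $D$.

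It then remains to prove the deterministic identity $\lc\mu_T^{g},f\rc=\lc\mu_0^{g},P^g_{0,T}f\rc$; subtracting it from the particle identity yields \eqref{eq:propagationPtT}. Heuristically $s\mapsto\lc\mu_s^g,F(s,\cdot)\rc$ is constant, because the forward dynamics \eqref{eq:discountFPK} contribute $\lc\mu_s^g,({\cal A}+g)F\rc$ while $\partial_s F=-({\cal A}+g)F$ contributes its negative. This is the step I expect to be the main obstacle, since \eqref{eq:discountFPK} is stated only for fixed test functions $\varphi\in C_0^{\infty}(D)$, whereas $F(s,\cdot)$ is time dependent and not compactly supported. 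I would first upgrade \eqref{eq:discountFPK} to time-dependent test functions (compactly supported in $x$, $C^1$ in $s$, $C^2$ in $x$) by a telescoping Riemann-sum argument over a partition of $[0,T]$, freezing the time variable on each subinterval and letting the mesh tend to zero, to obtain $\lc\mu_t^g,\varphi(t,\cdot)\rc=\lc\rho_0,\varphi(0,\cdot)\rc+\int_0^t\lc\mu_s^g,(\partial_s+{\cal A}+g)\varphi\rc\,ds$.

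Finally I would remove the compact-support requirement by localizing $F$ with smooth cutoffs $\chi_k$ supported in the domains $D_k$ of assumption {\AD} and passing to the limit $k\to\infty$. The localization error consists of the commutator terms $F\,{\cal A}\chi_k$ and $\sigma\sigma^{\top}\nabla_x\chi_k\cdot\nabla_x F$, which are supported on the shrinking shells $D_{k+1}\setminus D_k$; since $f\in C_b(D)$ and $g\le0$ force $\|F\|_{\infty}\le\|f\|_{\infty}$, the sub-probability bound $\mu_s^g(D)\le1$ together with $\mu_s^g(D\setminus D_k)\to0$ and the available growth/moment estimates let dominated convergence kill these terms. Inserting the admissible test function $\varphi=F$, for which $(\partial_s+{\cal A}+g)F\equiv0$, and reading the resulting identity at $t=T$ gives $\lc\mu_T^g,f\rc=\lc\rho_0,P^g_{0,T}f\rc=\lc\mu_0^g,P^g_{0,T}f\rc$, which combined with the particle identity completes the proof.
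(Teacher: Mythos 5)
Your proposal follows essentially the same route as the paper's proof: It\^o's formula applied to $P_{s,T}^gf(X_s^i)\exp\left(\int_0^s g(X_r^i)dr\right)$, with the backward equation \eqref{eq:backwardKol} annihilating the drift, gives the particle half, and the constancy of $s\mapsto\lc\mu_s^g,P_{s,T}^gf\rc$ gives the deterministic half, after which one subtracts and sets $t=T$. The real difference is one of rigor in the second half: the paper simply asserts \eqref{eq:propagatoreqn}, i.e.\ $\partial_t\lc\mu_t^g,P_{t,T}^gf\rc=0$, ``by \eqref{eq:discountFPK} and \eqref{eq:backwardKol}'', whereas you correctly observe that \eqref{eq:discountFPK} is stated only for time-independent, compactly supported test functions, and you supply the missing telescoping argument (for time dependence) and localization argument (for non-compact support); your $\varepsilon\downarrow0$ treatment of the regularity of $P^g_{\cdot,T}f$ at $s=0$ is likewise more careful than the paper, which applies It\^o's formula on $[0,t]$ directly. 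Be aware, however, that your final dominated-convergence step is not free of charge: with cutoffs $\chi_k$ adapted to the exhaustion $(D_k)$ and $\sigma$ of linear growth (assumption {\Absig}), the commutator term $\nabla_x\chi_k^{\top}\sigma\sigma^{\top}\nabla_xP_{s,T}^gf$ is of size $k\,\hat{Q}_g(k)$ on the shell where $\nabla_x\chi_k\neq0$, while for an \emph{arbitrary} ${\cal P}(D)$-valued solution $\mu$ one only knows $\mu_s^g(D\setminus D_k)\to0$ with no rate; so the limit passes as stated only when $\sigma$ and $\nabla_xP^g_{s,T}f$ are bounded (the ${\rm deg}(Q_g)=0$ setting of Lemma~\ref{lem:estimatePtTpsistarCIR}) or when $\mu$ is assumed to carry suitable moments. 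Since the paper's own proof silently assumes exactly this step, your proposal is at least as complete as the original; closing that tail estimate would make it strictly stronger.
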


\begin{proof}
Recall the state process of the particle system $X^i=(X_t^i)_{t\in[0,T]}$ defined by~\eqref{eq:particlesXi} for $i\geq1$. Lemma~\ref{lem:solFKPhi} allows us to apply It\^o's formula to $P_{t,T}^gf(X_t^i)$ with $t\in[0,T]$, and we have that, for $t\in[0,T]$,
\begin{align*}
P_{t,T}^gf(X_t^i)&=P_{0,T}^gf(X_0^i)+\int_0^t(\partial_s+{\cal A})P_{s,T}^gf(X_s^i)ds+\int_0^t\nabla_xP_{s,T}^gf(X_s^i)^{\top}\sigma(X_s^i)dW_s^i.
\end{align*}
Thus, by Eq.~\eqref{eq:backwardKol} in Lemma~\ref{lem:solFKPhi}, it yields that
\begin{align*}
P_{t,T}^gf(X_t^i)\exp\left(\int_0^tg(X_s^i)ds\right)&=P_{0,T}^gf(X_0^i)\nonumber\\
&\quad+\int_0^t\exp\left(\int_0^sg(X_r^i)dr\right)\nabla_xP_{s,T}^gf(X_s^i)^{\top}\sigma(X_s^i)dW_s^i.
\end{align*}
Recall \eqref{eq:mutgN}. Then, using \eqref{eq:empiricalmeasure}, it follows that
\begin{align*}
\big\lc\mu_t^{g,N},P_{t,T}^gf\big\rc&= \exp\left(\int_0^t\lc\mu_s^N,g\rc ds\right)\big\lc\mu_t^N,P_{t,T}^gf\rc=\frac{1}{N}\sum_{i=1}^NP_{t,T}^gf(X_t^i)\exp\left(\int_0^t g(X_s^{i})ds\right)\nonumber\\ &=\big\lc\mu_0^{g,N},P_{0,T}^gf\big\rc+\frac{1}{N}\sum_{i=1}^N\int_0^t\exp\left(\int_0^sg(X_r^i)dr\right)\nabla_xP_{s,T}^gf(X_s^i)^{\top}\sigma(X_s^i)dW_s^i.
\end{align*}
By equalities \eqref{eq:discountFPK} and \eqref{eq:backwardKol} in Lemma~\ref{lem:solFKPhi}, it holds that
\begin{align}\label{eq:propagatoreqn}
\partial_t\lc\mu_t^g, P_{t,T}^gf\rc=0,\quad {\rm for~all}~t\in[0,T].
\end{align}
Therefore $\lc \mu_t^g,P_{t,T}^gf\rc=\lc \mu_0^g,P_{0,T}^gf\rc$ for all $t\in[0,T]$. This gives that
\begin{align*}
\big\lc\mu_t^{g,N}-\mu_t^g,P_{t,T}^gf\big\rc&= \big\lc\mu_0^{g,N}-\mu_0^g,P_{0,T}^gf\big\rc\nonumber\\
&\quad+\frac{1}{N}\sum_{i=1}^N\int_0^t\exp\left(\int_0^sg(X_r^i)dr\right)\nabla_xP_{s,T}^gf(X_s^i)^{\top}\sigma(X_s^i)dW_s^i.
\end{align*}
In the equality, taking $t=T$ and using $P_{T,T}^gf=f$ given in Lemma~\ref{lem:solFKPhi}, we arrive at \eqref{eq:propagationPtT}. Thus, we complete the proof of the lemma.
\end{proof}

By \eqref{eq:integral-star}, we have that, for all $\psi\in{\cal R}_1$,
\begin{align}\label{eq:integralpsistar}
\int_{D_{\star}}\psi(x)(\iota\mu_t^{g,N}-\iota\mu_t^{g})(dx)=\int_{D}\psi_{\star}(x)(\mu_t^{g,N}-\mu_t^{g})(dx)
=\big\lc\mu_t^{g,N}-\mu_t^{g},\psi_{\star}\big\rc,
\end{align}
where $\psi_{\star}(x):=\psi(x)-\psi(\star)$ for $x\in D$. It follows from \eqref{eq:LipRn} that $\psi_{\star}$ is a (bounded) Lipschitzian continuous function on $D$ with the Lipschitzian coefficient being less than one.
Then, we have the following estimate of the gradient $\nabla_xP_{t,T}\psi_{\star}$ which is given by
\begin{lemma}\label{lem:estimatePtTpsistar}
Let assumptions {\Absig}, {\AD} and {\Agl} hold. Then, there exists a polynomial $\hat{Q}_g=\hat{Q}_{g,b,\sigma,T}:\R_+\to\R_+$ (depending only on $g,b,\sigma,T$) with ${\rm deg}(\hat{Q}_g)={\rm deg}(Q_g)$, such that, for all $\psi\in{\cal R}_1$,
\begin{align}\label{eq:gradientPtTpsistar}
|\nabla_xP_{t,T}\psi_{\star}(x)|\leq \hat{Q}_g(|x|),\quad (t,x)\in[0,T]\times D.
\end{align}
\end{lemma}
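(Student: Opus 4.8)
The plan is to bound the gradient by first establishing a local Lipschitz estimate for the map $x\mapsto P_{t,T}^g\psi_\star(x)$ (the propagator defined in \eqref{eq:discountpropagator}) with a polynomially growing constant, and then recovering the pointwise gradient bound as a limit of difference quotients. The gradient is well-defined because $\psi_\star\in C_b(D)$ with $|\psi_\star|\leq2$ and Lipschitzian coefficient at most one (by \eqref{eq:LipRn}), so Lemma~\ref{lem:solFKPhi} (whose hypotheses {\Absig}, {\AD}, {\Agl}-(i) are in force) guarantees $P_{\cdot,T}^g\psi_\star\in C^{1,2}((0,T]\times D)$; in particular the partial derivatives in $x$ equal the limits of the corresponding difference quotients. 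Throughout I use the reduction already made, namely $g\leq0$, so that $\exp(\int_t^Tg(X_s^{t,x})ds)\leq1$.

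Next I would recall the two standard flow estimates that follow from {\Absig} (Lipschitzian coefficients with linear growth) via the Burkholder--Davis--Gundy inequality and Gronwall's lemma: for every $p\geq1$ there is a constant $C_p$ with
\[
\Ex\Big[\sup_{s\in[t,T]}|X_s^{t,x}|^p\Big]\leq C_p(1+|x|^p),\qquad \Ex\Big[\sup_{s\in[t,T]}|X_s^{t,x}-X_s^{t,x'}|^p\Big]\leq C_p|x-x'|^p.
\]
Since the starting point is deterministic, all polynomial moments are finite and grow polynomially in $|x|$, so assumption {\Ax} is not needed here. The heart of the argument is then the decomposition
\[
P_{t,T}^g\psi_\star(x)-P_{t,T}^g\psi_\star(x')=\Ex\Big[\big(\psi_\star(X_T^{t,x})-\psi_\star(X_T^{t,x'})\big)e^{\int_t^Tg(X_s^{t,x})ds}\Big]+\Ex\Big[\psi_\star(X_T^{t,x'})\big(e^{\int_t^Tg(X_s^{t,x})ds}-e^{\int_t^Tg(X_s^{t,x'})ds}\big)\Big].
\]

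The first term is controlled by the Lipschitz-one bound on $\psi_\star$, the bound $e^{\int g}\leq1$, and the flow-difference estimate, giving $\leq C|x-x'|$. For the second term I would use $|\psi_\star|\leq2$, the elementary inequality $|e^a-e^b|\leq|a-b|$ valid for $a,b\leq0$, and then the polynomial-Lipschitz bound \eqref{eq:assumg-poly} on $g$ to write
\[
\Big|\int_t^T\!\big(g(X_s^{t,x})-g(X_s^{t,x'})\big)ds\Big|\leq\int_t^T Q_g\big(|X_s^{t,x}|+|X_s^{t,x'}|\big)\,|X_s^{t,x}-X_s^{t,x'}|\,ds.
\]
Applying Cauchy--Schwarz in $\omega$ and the two flow estimates with $d:={\rm deg}(Q_g)$ yields the bound $C(1+|x|^d+|x'|^d)|x-x'|$ for the second term. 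Combining the two terms gives $|P_{t,T}^g\psi_\star(x)-P_{t,T}^g\psi_\star(x')|\leq C(1+|x|^d+|x'|^d)|x-x'|$; dividing by $|x-x'|$ and letting $x'\to x$ produces the claimed estimate with $\hat{Q}_g(r):=C(1+2r^d)$, which has ${\rm deg}(\hat{Q}_g)={\rm deg}(Q_g)$.

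The main obstacle I anticipate is the second term: controlling the difference of the exponential weights forces a pairing of the polynomially growing factor $Q_g(|X^{t,x}|+|X^{t,x'}|)$ against the $L^2$ flow-difference estimate. The delicate point is to carry out this pairing via Cauchy--Schwarz and the higher-order moment bounds (which are available precisely because the starting point is deterministic) so that the resulting polynomial has degree exactly ${\rm deg}(Q_g)$ and no larger. This degree-preservation is what the statement demands and what is subsequently needed in the moment estimates underlying the propagation-of-chaos argument.
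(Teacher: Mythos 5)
Your proposal is correct and follows essentially the same route as the paper's proof: the same decomposition of $P_{t,T}^g\psi_{\star}(x)-P_{t,T}^g\psi_{\star}(x')$ into the $\psi_{\star}$-difference term and the exponential-weight term, the same use of $g\leq0$, $\|\psi_{\star}\|_{\infty}\leq2$, the polynomial-Lipschitz bound \eqref{eq:assumg-poly} paired via Cauchy--Schwarz with the Gronwall flow estimates, and the same passage from the resulting locally Lipschitz bound to the gradient via the $C^{1,2}$ regularity of Lemma~\ref{lem:solFKPhi}. The only (immaterial) differences are that you work with first powers where the paper squares the estimate and halves the degree at the end, and that you correctly observe the moment bounds need only a deterministic starting point rather than {\Ax}.
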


\begin{proof}
Note that $\|\psi_{\star}\|_{\infty}\leq 2$ for all $\psi\in{\cal R}_1$. Then, in view of \eqref{eq:discountpropagator}, for all $x_1,x_2\in D$,
\begin{align}\label{eq:P12diff}
&\left|P_{t,T}^g\psi_{\star}(x_1)-P_{t,T}^g\psi_{\star}(x_2)\right|^2\leq 2\Ex\left[\left|\psi_{\star}(X_T^{t,x_1})-\psi_{\star}(X_T^{t,x_2})\right|^2\exp\left(2\int_t^Tg(X_s^{t,x})ds\right)\right]\nonumber\\
&\qquad\qquad+2\Ex\left[\left|\psi_{\star}(X_T^{t,x_2})\right|^2\left|\exp\left(\int_t^Tg(X_s^{t,x_1})ds\right)
-\exp\left(\int_t^Tg(X_s^{t,x_2})ds\right)\right|^2\right]\nonumber\\
&\qquad\quad\leq2\Ex\left[\left|X_T^{t,x_1}-X_T^{t,x_2}\right|^2\right]+8\Ex\left[\left|\int_t^Tg(X_s^{t,x_1})ds-\int_t^Tg(X_s^{t,x_2})ds\right|^2\right]\notag\\
&\qquad\quad\leq2\Ex\left[\left|X_T^{t,x_1}-X_T^{t,x_2}\right|^2\right]+8T\int_t^T\Ex\left[\left|g(X_s^{t,x_1})-g(X_s^{t,x_2})\right|^2\right]ds.
\end{align}
We first note that, it follows from the assumption {\Agl}-(ii) that, for some polynomial $Q_g:\R_+\to\R_+$ and for all $s\in[t,T]$,
\begin{align}\label{eq:gdiffXtx}
\Ex\left[\left|g(X_s^{t,x_1})-g(X_s^{t,x_2})\right|^2\right]&\leq\Ex\left[Q^2_g\left(\left|X_s^{t,x_1}\right|
+\left|X_s^{t,x_2}\right|\right)\left|X_s^{t,x_1}-X_s^{t,x_2}\right|^2\right]\\
  &\leq\left\{\Ex\left[Q^4_g\left(\left|X_s^{t,x_1}\right|+\left|X_s^{t,x_2}\right|\right)\right]\right\}^{\frac12}
  \left\{\Ex\left[\left|X_s^{t,x_1}-X_s^{t,x_2}\right|^4\right]\right\}^{\frac12}.\notag
\end{align}
On the other hand, by the assumption {\Absig}, we have that, for $k\geq1$,
\begin{align}\label{eq:estidifflip}
\Ex\left[\sup_{s\in[t,T]}\left|X_s^{t,x_1}-X_s^{t,x_2}\right|^{2k}\right]\leq |x_1-x_2|^{2k}+C_k\int_t^T \Ex\left[\left|X_s^{t,x_1}-X_s^{t,x_2}\right|^{2k}\right]ds,
\end{align}
where $C_k=C_{b,\sigma,T,k}$ is a positive constant which depends on $b,\sigma,T,k$. Then, the Gronwall's lemma yields that
\begin{align}\label{eq:momentXchaos}
\Ex\left[\sup_{s\in[t,T]}\left|X_s^{t,x_1}-X_s^{t,x_2}\right|^{2k}\right]\leq|x_1-x_2|^{2k}e^{(T-t)C_k}.
\end{align}
Then, it follows from \eqref{eq:P12diff}, \eqref{eq:gdiffXtx} and \eqref{eq:momentXchaos} that, for all $(t,x_1,x_2)\in[0,T]\times D^2$,
\begin{align*}
  &\left|P_{t,T}^g\psi_{\star}(x_1)-P_{t,T}^g\psi_{\star}(x_2)\right|^2\\
  &\quad\leq2\Ex\left[\left|X_T^{t,x_1}-X_T^{t,x_2}\right|^2\right]+8T\int_t^T\Ex\left[\left|g(X_s^{t,x_1})-g(X_s^{t,x_2})\right|^2\right]ds\notag\\
  &\quad\leq2|x_1-x_2|^2e^{(T-t)C_{1}}+8T^2\int_t^T\left\{\Ex\left[Q^4_g(\left|X_s^{t,x_1}\right|+\left|X_s^{t,x_2}\right|)\right]\right\}^{\frac12}
  \left\{\Ex\left[\left|X_s^{t,x_1}-X_s^{t,x_2}\right|^4\right]\right\}^{\frac12}ds\notag\\
  &\quad\leq2e^{(T-t)C_{1}}|x_1-x_2|^2+8T^2e^{\frac{(T-t)C_{2}}
  {2}}|x_1-x_2|^2\int_t^T\left\{\Ex\left[Q^4_g(\left|X_s^{t,x_1}\right|+\left|X_s^{t,x_2}\right|)\right]\right\}^{\frac12}ds.
\end{align*}
It is not difficult to show from \eqref{eq:momentqq} that there exists a polynomial ${\tilde Q}_{g}:={\tilde Q}_{g,b,\sigma,T}:\R_+\to\R_+$ (which depends on $g,b,\sigma,T$ only) such that ${\rm deg}({\tilde Q}_{g})=2{\rm deg}(Q_{g})$, and for all $(t,x_1,x_2)\in[0,T]\times D^2$,
\begin{align*}
  2e^{(T-t)C_{1}}+8T^2e^{\frac{(T-t)C_{2}}{2}}\int_t^T\left\{\Ex\left[Q^4_g(\left|X_s^{t,x_1}\right|+\left|X_s^{t,x_2}\right|)\right]\right\}^{\frac12}ds
  \leq{\tilde Q}_{g}(|x_1|+|x_2|).
\end{align*}
Therefore, we arrive at, for all $(t,x_1,x_2)\in[0,T]\times D^2$,
\begin{align}\label{eq:Ptdifference}
  \left|P_{t,T}^g\psi_{\star}(x_1)-P_{t,T}^g\psi_{\star}(x_2)\right|^2\leq{\tilde Q}_{g}(|x_1|+|x_2|)|x_1-x_2|^2.
\end{align}
Thanks to Lemma~\ref{lem:solFKPhi}, we have that $P_{\cdot,T}^g\psi_{\star}\in C^{1,2}((0,T]\times D)\cap C([0,T]\times D)$ by the fact of $\psi_{\star}\in C_b(D)$. By letting $x_2\rightarrow x_1$ in \eqref{eq:Ptdifference}, it follows that, for all $(t,x)\in[0,T]\times D$,
\begin{align}\label{eq:absoluatevaluegradientchao}
  |\nabla_xP_{t,T}\psi_{\star}(x)|\leq{\tilde Q}^{\frac12}_{g}(2|x|).
\end{align}
Then, the estimate \eqref{eq:gradientPtTpsistar} follows from \eqref{eq:absoluatevaluegradientchao} and the fact that ${\tilde Q}^{\frac12}_{g}(2|x|)\leq\hat{Q}_{g}(|x|)$ for some polynomial $\hat{Q}_g:=\hat{Q}_{g,b,\sigma,T}:\R_+\to\R_+$ with ${\rm deg}({\hat Q}_{g})=\frac12 {\rm deg}({\tilde Q}_{g})={\rm deg}(Q_{g})$. Thus, we complete the proof of the lemma.
\end{proof}

We next discuss the estimate \eqref{eq:gradientPtTpsistar} in Lemma~\ref{lem:estimatePtTpsistar} for the one-dimensional case (i.e., $m=n=1$), but the volatility function $\sigma:D\to\R$ of SDE~\eqref{eq:SDEXtx} is only H\"{o}lder continuous. To this purpose, we impose the following assumption:
\begin{itemize}
  \item[{\Acir}] for $m=n=1$, $b:D\to\R$ is Lipschitiz continuous and $\sigma:D\to\R$ is H\"{o}lder continuous with exponent $\gamma\in[\frac{1}{2},1)$.
\end{itemize}
\begin{lemma}\label{lem:estimatePtTpsistarCIR}
Let assumptions {\Acir}, {\AD} and {\Agl} with ${\rm deg}(Q_g)=0$ hold. Then, there exists a constant $K=K_{g,b,\sigma,T}>0$ such that, for all $\psi\in{\cal R}_1$,
\begin{align}\label{eq:gradientPtTpsistarCIR}
|\nabla_xP_{t,T}\psi_{\star}(x)|\leq K,\quad (t,x)\in[0,T]\times D.
\end{align}
\end{lemma}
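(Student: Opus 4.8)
The plan is to exploit the two special features of this one-dimensional setting: because $\mathrm{deg}(Q_g)=0$, the bound~\eqref{eq:assumg-poly} in {\Agl} says that $g$ is \emph{globally} Lipschitz with some constant $C_g$, and, by the standing convention $g\le0$, the exponential weight in the propagator~\eqref{eq:discountpropagator} stays in $(0,1]$. First I would reduce the assertion to an $L^1$-control of the difference of two copies of the diffusion started at $x_1$ and $x_2$. Using $\|\psi_\star\|_\infty\le2$, $\|\psi_\star\|_{\mathrm{Lip}}\le1$ (cf.~\eqref{eq:LipRn}) and the elementary inequality $|e^a-e^b|\le|a-b|$ valid for $a,b\le0$, the same splitting as in~\eqref{eq:P12diff} gives, for all $(t,x_1,x_2)\in[0,T]\times D^2$,
\begin{align*}
\left|P_{t,T}^g\psi_\star(x_1)-P_{t,T}^g\psi_\star(x_2)\right|
\le\Ex\left[\left|X_T^{t,x_1}-X_T^{t,x_2}\right|\right]
+2C_g\int_t^T\Ex\left[\left|X_s^{t,x_1}-X_s^{t,x_2}\right|\right]ds.
\end{align*}
Thus the whole estimate follows once I show $\Ex[|X_s^{t,x_1}-X_s^{t,x_2}|]\le C|x_1-x_2|$ uniformly in $s\in[t,T]$. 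The $L^2$-Gronwall argument of Lemma~\ref{lem:estimatePtTpsistar} is no longer available, since $\sigma$ is only H\"older-$\gamma$; this is the crux.

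The main, and hardest, step is therefore this $L^1$-estimate, which I would establish by the Yamada--Watanabe method. Writing $Y_s:=X_s^{t,x_1}-X_s^{t,x_2}$ (so $Y_t=x_1-x_2$) and letting $h(u):=L_\sigma u^\gamma$ denote the H\"older modulus of $\sigma$ and $L_b$ the Lipschitz constant of $b$, I choose the standard approximations $\phi_k\in C^2(\R)$ with $\phi_k(y)\uparrow|y|$, $|\phi_k'|\le1$ and $0\le\phi_k''(y)\le\frac{2}{k}h^{-2}(|y|)\I_{\{a_k<|y|<a_{k-1}\}}$, where $a_0=1$ and $a_k\downarrow0$ solves $\int_{a_k}^{a_{k-1}}h^{-2}(u)du=k$; the divergence $\int_{0+}h^{-2}=\infty$, equivalent to $\gamma\ge\frac12$, is exactly what makes this construction possible. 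Applying It\^o's formula to $\phi_k(Y_s)$ and taking expectations annihilates the stochastic integral; the drift term is bounded by $L_b\int_t^u\Ex[|Y_s|]ds$ via the Lipschitz continuity of $b$, while the second-order term satisfies
\begin{align*}
\frac12\Ex\left[\int_t^u\phi_k''(Y_s)\left(\sigma(X_s^{t,x_1})-\sigma(X_s^{t,x_2})\right)^2ds\right]
\le\frac1k\Ex\left[\int_t^u\I_{\{a_k<|Y_s|<a_{k-1}\}}ds\right]\le\frac{T}{k}.
\end{align*}
Letting $k\to\infty$ and invoking monotone convergence yields $\Ex[|Y_u|]\le|x_1-x_2|+L_b\int_t^u\Ex[|Y_s|]ds$, and Gronwall's lemma gives $\Ex[|Y_u|]\le|x_1-x_2|e^{L_bT}$ for every $u\in[t,T]$.

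Combining the two displays produces a constant $K=K_{g,b,\sigma,T}>0$ with $|P_{t,T}^g\psi_\star(x_1)-P_{t,T}^g\psi_\star(x_2)|\le K|x_1-x_2|$ for all $(t,x_1,x_2)$. Finally, since Lemma~\ref{lem:solFKPhi} guarantees $P_{\cdot,T}^g\psi_\star\in C^{1,2}((0,T]\times D)\cap C([0,T]\times D)$, dividing by $|x_1-x_2|$ and letting $x_2\to x_1$ yields~\eqref{eq:gradientPtTpsistarCIR}. The only point requiring care besides the Yamada--Watanabe estimate is the vanishing of the expectation of the stochastic integral, which follows from the moment bounds for $X^{t,x}$ under {\Acir} by a routine localization. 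It is precisely the H\"older regularity of $\sigma$ --- forcing the replacement of the $L^2$-Gronwall bound by the Yamada--Watanabe $L^1$-estimate, and with it the restriction $\mathrm{deg}(Q_g)=0$ so that $g$ is globally Lipschitz --- that constitutes the main obstacle.
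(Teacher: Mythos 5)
Your proposal is correct and follows essentially the same route as the paper: the same splitting of the propagator difference using $g\le0$ and the global Lipschitz continuity of $g$, reduction to an $L^1$-estimate $\Ex[|X_s^{t,x_1}-X_s^{t,x_2}|]\le e^{CT}|x_1-x_2|$ via the Yamada--Watanabe argument plus Gronwall, and the passage to the gradient bound through the regularity from Lemma~\ref{lem:solFKPhi}. The only difference is that where you carry out the Yamada--Watanabe approximation construction explicitly, the paper simply invokes Proposition 5.2.13 of \cite{karatzasshreve1991}, so your write-up is a self-contained version of the same proof.
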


\begin{proof}
Note that, under the assumption {\Acir}, the estimate \eqref{eq:estidifflip} in the proof of Lemma~\ref{lem:estimatePtTpsistar} does not hold. However, the assumption {\Acir} implies that SDE~\eqref{eq:SDEXtx} satisfies the Yamada-Watanabe condition given in Proposition 5.2.13 of \cite{karatzasshreve1991}. This yields from this proposition that, for all $s\in[t,T]$, there exists a constant $C=C_{b,\sigma,T}>0$ such that
\begin{align*}
\Ex\left[\left|X_s^{t,x_1}-X_s^{t,x_2}\right|\right]\leq |x_1-x_2|+C\int_t^T \Ex\left[\left|X_s^{t,x_1}-X_s^{t,x_2}\right|\right]ds.
\end{align*}
Then, the Gronwall's lemma yields that, for all $s\in[t,T]$,
\begin{align}\label{eq:momentXchaosCIR}
\Ex\left[\left|X_s^{t,x_1}-X_s^{t,x_2}\right|\right]\leq|x_1-x_2|e^{(T-t)C}.
\end{align}
Note that, under the assumption {\Agl} with ${\rm deg}(Q_g)=0$, the fitness function $g:D\to\R$ is hence Lipschitiz continuous. Similarly to \eqref{eq:P12diff}, we obtain from \eqref{eq:momentXchaosCIR} that
\begin{align*}
\left|P_{t,T}^g\psi_{\star}(x_1)-P_{t,T}^g\psi_{\star}(x_2)\right|&\leq\Ex\left[\left|X_T^{t,x_1}-X_T^{t,x_2}\right|\right]
+2\int_t^T\Ex\left[\left|g(X_s^{t,x_1})-g(X_s^{t,x_2})\right|\right]ds\nonumber\\
&\leq \left(1+2T\|g\|_{\rm Lip}\right)e^{TC}|x_1-x_2|.
\end{align*}
This proves the estimate \eqref{eq:gradientPtTpsistarCIR}.
\end{proof}

Building upon the auxiliary results established by the above lemmas, we next prove the main result of this section on the propagation of chaos of the FPK equation \eqref{eq:FPKeqn} with respect to the metric defined by \eqref{eq:dp}:
\begin{theorem}\label{thm:propagationchaos}
Let assumptions {\Ax}, {\Absig}, {\AD} and {\Agl} {\rm[}or assumptions {\Ax}, {\Acir}, {\AD} and {\Agl} with ${\rm deg}(Q_g)=0${\rm]} hold. Let $\mu=(\mu_{t})_{t\in[0,T]}$ be an arbitrary ${\cal P}(D)$-valued solution of the FPK equation \eqref{eq:FPKeqn} and $\mu^N=(\mu_t^N)_{t\in[0,T]}$ be the ${\cal P}(D)$-valued process given by \eqref{eq:solutionmutN} in Section~\ref{sec:particlesys}. Then, for any $T>0$ and $N\geq1$, there exists a constant $C>0$ which is independent of $N$ such that, for any $p\geq1$,
\begin{align}\label{eq:convergenceratepropagation}
d_{q,T}(\mu,\mu^N)\leq C\left(\alpha(p,q,n,N)+\frac{1}{N^{q-1}}\right),\quad q\geq2,
\end{align}
where the metric $d_{q,T}(\cdot,\cdot)$ is defined by \eqref{eq:dp}, and the first rate of convergence rate $\alpha(p,q,n,N)$ in \eqref{eq:convergenceratepropagation} is given by
\begin{align}\label{eq:converratemu0N}
\alpha(p,q,n,N):=\left\{
                    \begin{array}{ll}
    \displaystyle N^{-\frac{1}{2}}+N^{-\frac{p-q}{p}}, & q>\frac{n}{2},~p\neq2q;\\ \\
    \displaystyle N^{-\frac{1}{2}}\ln(1+N)+N^{-\frac{p-q}{p}}, & q=\frac{n}{2},~p\neq 2q;\\ \\
    \displaystyle N^{-\frac{q}{n}}+N^{-\frac{p-q}{p}}, & q<\frac{n}{2},~p\neq\frac{n}{n-q}.
                    \end{array}
                 \right.
\end{align}
\end{theorem}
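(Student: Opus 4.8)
The plan is to reduce everything, via the backward propagator identity of Lemma~\ref{lem:propagatormugdiff}, to two contributions: an \emph{initial} fluctuation of the empirical measure $\mu_0^N$ around $\rho_0$, which will carry the Fournier--Guillin rate $\alpha(p,q,n,N)$, and a \emph{martingale} fluctuation coming from the driving noises $W^i$, which will be of lower order. Throughout I use that {\Agl} lets us assume $g\le 0$, so that the propagator $P_{t,T}^g$ is a sub-Markovian contraction.

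Fix $t\in(0,T]$ and $\psi\in{\cal R}_1$. Applying Lemma~\ref{lem:propagatormugdiff} with terminal time $t$ and test function $f=\psi_{\star}$ (legitimate since $\psi_{\star}\in C_b(D)$, so the regularity from Lemma~\ref{lem:solFKPhi} holds), and using $\lc\mu_t^g,\psi_{\star}\rc=\lc\rho_0,P_{0,t}^g\psi_{\star}\rc$ from \eqref{eq:propagatoreqn} together with \eqref{eq:integralpsistar}, I obtain the decomposition
\[
\int_{D_{\star}}\psi\,d\big(\iota\mu_t^{g,N}-\iota\mu_t^g\big)=\lc\mu_0^N-\rho_0,\,P_{0,t}^g\psi_{\star}\rc+\frac1N\sum_{i=1}^N\int_0^t e^{\int_0^s g(X_r^i)dr}\,\nabla_x P_{s,t}^g\psi_{\star}(X_s^i)^{\top}\sigma(X_s^i)\,dW_s^i=:I_t^N(\psi)+J_t^N(\psi).
\]
Hence $d_{BL}(\iota\mu_t^{g,N},\iota\mu_t^g)\le\sup_{\psi}|I_t^N(\psi)|+\sup_{\psi}|J_t^N(\psi)|$, and I then bound $\Ex[\sup_t(\cdots)^q]^{1/q}$ term by term in the definition \eqref{eq:dp}. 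For the initial term, Lemma~\ref{lem:estimatePtTpsistar} (or Lemma~\ref{lem:estimatePtTpsistarCIR} in the H\"older case) gives $|\nabla_x P_{0,t}^g\psi_{\star}(x)|\le\hat Q_g(|x|)$ \emph{uniformly} in $t$ and $\psi$, while $\|P_{0,t}^g\psi_{\star}\|_{\infty}\le2$; thus the family $\{P_{0,t}^g\psi_{\star}\}_{t,\psi}$ is equi-Lipschitz with polynomial weight $1+|x|^{{\rm deg}(Q_g)}$. Consequently $\sup_{t,\psi}|I_t^N(\psi)|$ is dominated by a polynomially weighted Wasserstein distance between $\mu_0^N$ and $\rho_0$; splitting the weight by H\"older's inequality and absorbing it with the moment of order $({\rm deg}(Q_g)+1)q$ granted by {\Ax}, I reduce to the standard $\Ex[W_q(\mu_0^N,\rho_0)^q]$, to which the empirical-measure convergence rate of Fournier--Guillin applies and produces exactly $\alpha(p,q,n,N)$.

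The term $J_t^N$ is, for each fixed $t$ and $\psi$, the terminal value of a martingale in its running upper limit, whose quadratic variation is bounded \emph{uniformly in $\psi$} by $\frac1{N^2}\sum_i\int_0^t\hat Q_g(|X_s^i|)^2|\sigma(X_s^i)|^2\,ds$, using the same gradient estimate and $e^{\int_0^s g}\le1$. Independence of the pairs $(X^i,W^i)$, the Burkholder--Davis--Gundy and Rosenthal inequalities, and the moment bound \eqref{eq:momentqq} then give $\Ex[|J_t^N(\psi)|^q]\le C(N^{-q/2}+N^{-(q-1)})$; since $q\ge2$ this is of the advertised order $N^{-(q-1)}$ and is in fact dominated by the $N^{-1/2}$ already present in $\alpha$. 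Combining the two estimates and taking $q$-th roots yields \eqref{eq:convergenceratepropagation}.

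The hard part is controlling $J_t^N$ \emph{simultaneously} over the terminal time $t$ and the test function $\psi$. Because the backward propagator makes the integrand $\nabla_x P_{s,t}^g\psi_{\star}$ depend on the terminal time $t$, the process $t\mapsto J_t^N(\psi)$ is \emph{not} a martingale, and one cannot pass $\sup_{t,\psi}$ through the stochastic integral without destroying the $O(N^{-1/2})$ cancellation produced by averaging the $N$ independent noises. My strategy is to exploit that the quadratic-variation bound above is free of both $t$ and $\psi$: this allows a maximal Burkholder estimate at each fixed terminal time against a single $(t,\psi)$-independent dominating increasing process, a reduction of the $\psi$-supremum to a countable dense subset via the relative compactness of ${\cal R}_1$ in $C(D_{\star})$ (Arzel\`a--Ascoli, valid because $D_{\star}$ is compact), and control of the $t$-supremum by a Kolmogorov-type continuity estimate in $t$. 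Since the entire $J$-contribution is in any case dominated by the Fournier--Guillin rate, only a crude such bound is required.
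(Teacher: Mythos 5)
Your skeleton coincides with the paper's: decompose $\lc\mu_t^{g,N}-\mu_t^g,\psi_{\star}\rc$ through the propagator identity of Lemma~\ref{lem:propagatormugdiff}, send the initial fluctuation to a Fournier--Guillin estimate and the stochastic-integral term to a BDG estimate built on the gradient bounds of Lemmas~\ref{lem:estimatePtTpsistar} and \ref{lem:estimatePtTpsistarCIR}, arriving at the orders $\alpha(p,q,n,N)$ and $N^{-(q-1)}$. The genuine gap is in your treatment of the initial term. You bound $\sup_{\psi}|\lc\mu_0^N-\rho_0,P_{0,t}^g\psi_{\star}\rc|$ by a polynomially weighted Wasserstein distance and, after a H\"older split, claim a reduction to $\Ex[{\cal W}_q(\mu_0^N,\rho_0)^q]$ on $D\subset\R^n$. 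Two problems: first, the Fournier--Guillin bound with parameter $p$ requires $\int_D|x|^p\rho_0(dx)<\infty$, while assumption {\Ax} only provides moments up to order $({\rm deg}(Q_g)+1)q$, so for larger $p$ your constant is infinite and the assertion ``for any $p\geq1$'' is not recovered; second, the polynomial weight is a random quantity correlated with ${\cal W}_q(\mu_0^N,\rho_0)$, so pulling it out of the expectation costs another H\"older step and requires moments of ${\cal W}_q^{qr}$ for some $r>1$, beyond what Theorem 1 of Fournier--Guillin gives. The paper avoids both issues with a different reduction: it sets $\phi_0:=P_{0,T}^g\psi_{\star}$, notes $\|\phi_0\|_{\infty}\leq2$ (since $g\leq0$ makes the propagator a sup-norm contraction), and pushes the initial empirical measure forward through $\phi_0$; then $\lc\mu_0^{g,N}-\mu_0^g,\phi_0\rc=\lc\tilde\mu^N-\tilde\rho_0,I\rc\leq{\cal W}_1(\tilde\mu^N,\tilde\rho_0)\leq{\cal W}_q(\tilde\mu^N,\tilde\rho_0)$, where $\tilde\rho_0$ is supported in $[-2,2]$, so every $p$-th moment is at most $2^p$ and Fournier--Guillin applies with a constant uniform in $\psi$ and valid for every $p\geq1$. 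This pushforward trick is the step your route is missing.

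On the double supremum: your observation that $t\mapsto J_t^N(\psi)$ is not a martingale (the integrand $\nabla_xP_{s,t}^g\psi_{\star}$ carries the terminal time $t$) is correct and worth making -- and, for what it is worth, the paper does not resolve it either: its displayed estimates fix the terminal time at $T$ inside the propagator, bound $\Ex[\sup_t|\cdot|^q]$ of that single running martingale for each fixed $\psi$, and never justify interchanging $\sup_{t,\psi}$ with the expectation. But your proposed remedy is unsupported as stated. A chaining or Kolmogorov-type argument uniform over ${\cal R}_1$ involves the metric entropy of a Lipschitz ball over an $n$-dimensional state space, which behaves like $\epsilon^{-n}$; the corresponding entropy integral diverges for $n\geq2$, so the uniform-in-$\psi$ rate obtained this way is dimension-dependent, and it is not clear it remains of order $N^{-(q-1)}$ or is dominated by $\alpha(p,q,n,N)$. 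Your phrase ``only a crude such bound is required'' hides precisely the part that needs proof; if you want to exceed the paper's level of rigor here, you need a pathwise dual-norm argument tailored to the structure (as you did for the initial term), not generic chaining.
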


\begin{proof}
Using \eqref{eq:propagationPtT} in Lemma~\ref{lem:propagatormugdiff}, it results in, for all $\psi\in{\cal R}_1$,
\begin{align}\label{eq:propagationPtTpsistar}
\big\lc\mu_T^{g,N}-\mu_T^g,\psi_{\star}\big\rc&= \big\lc\mu_0^{g,N}-\mu_0^g,P_{0,T}^g\psi_{\star}\big\rc\nonumber\\
&\quad+\frac{1}{N}\sum_{i=1}^N\int_0^T\exp\left(\int_0^sg(X_r^i)dr\right)\nabla_xP_{s,T}^g\psi_{\star}(X_s^i)^{\top}\sigma(X_s^i)dW_s^i.
\end{align}
We next consider the estimate of the first term of the r.h.s. of the equality~\eqref{eq:propagationPtTpsistar}. Note that
\begin{align}\label{eq:mu0gNmu0}
\mu_0^{g,N}=\mu_0^N=\frac{1}{N}\sum_{i=1}^N\delta_{X_0^i},\quad \mu_0^g=\rho_0.
\end{align}
Let $\phi_0:=P_{0,T}\psi_{\star}$ with $\psi\in{\cal R}_1$. Then, it follows from the assumption {\Ax} that $(\phi_0(X_0^i))_{i\geq1}$ are i.i.d. r.v.s. Let $\tilde{\rho}_0:={\cal L}(\phi_0(X_0^1))$, i.e., the law of the r.v. $\phi_0(X_0^1)$, and define $\tilde\mu^N:=\frac{1}{N}\sum_{i=1}^N\delta_{\phi_0(X_0^i)}$. Then, for $I(x)=x$ for $x\in D$, we obtain from \eqref{eq:converratemu0N0} and the Kantorovich-Rubinstein dual formula (see \cite{Villani2003}) that, for $q\geq2$,
\begin{align}\label{eq:initialdifference0}
\Ex\left[\left|\big\lc\mu_0^{g,N}-\mu_0^g,P_{0,T}^g\psi_{\star}\big\rc\right|^q\right]
&=\Ex\left[\left|\big\lc\mu_0^{g,N}-\mu_0^g,\phi_0\big\rc\right|^q\right]=\Ex\left[\left|\big\lc\tilde\mu^N-\tilde\rho_0,I\big\rc\right|^q\right]\nonumber\\
&\leq\Ex\left[{\cal W}_1(\tilde\mu^N,\tilde\rho_0)^q\right]\leq\Ex\left[{\cal W}_q(\tilde\mu^{N},\tilde\rho_0)^q\right].
\end{align}
By \eqref{eq:discountpropagator} and the fact that $\|\psi_{\star}\|_{\infty}\leq2$, we have from the assumption {\Agl}-(i) (without loss of generality, we have assumed that $g\leq0$) that $\|\phi_0\|_{\infty}\leq2$. Therefore $\int_{D}|x|^p\tilde\rho_0(dx)=\int_{D}|\phi_0(x)|^p\rho_0(dx)\leq 2^p$ for any $p\geq1$. Note that, under the assumption {\Ax}, by Glivenko-Cantelli’s theorem, the empirical measure $\tilde{\mu}^N$ tends weakly to $\tilde{\rho}_0$ as $N\to\infty$. Moreover,  Theorem 1 of \cite{FournierGuillin15} yields that, there is a constant $C$ depending only on $n,p,q$ such that
\begin{align}\label{eq:converratemu0N0}
\Ex\left[{\cal W}_q(\tilde{\mu}_0^N,\tilde{\rho}_0)^q\right]&\leq C\left(\int_{D}|x|^p\tilde{\rho}_0(dx)\right)^{\frac{q}{p}}\alpha(p,q,n,N)
\leq 2^qC\alpha(p,q,n,N),
\end{align}
where ${\cal W}_q$ denotes the Wasserstein metric with order $q\geq2$ on the $q$-th order Wasserstein space ${\cal P}_q(D)$ (see \cite{Villani2003}), and the rate of convergence of rate is given by \eqref{eq:converratemu0N}. Then, in view of \eqref{eq:initialdifference0} and \eqref{eq:converratemu0N0}, there is a constant $C$ depending only on $n,p,q$ such that
\begin{align}\label{eq:initialdifference}
\Ex\left[\left|\big\lc\mu_0^{g,N}-\mu_0^g,P_{0,T}^g\psi_{\star}\big\rc\right|^q\right]\leq C\alpha(p,q,n,N).
\end{align}
On the other hand, apply the estimate \eqref{eq:gradientPtTpsistar} in Lemma~\ref{lem:estimatePtTpsistar} or the estimate \eqref{eq:gradientPtTpsistarCIR} in Lemma~\ref{lem:estimatePtTpsistarCIR}. Then, for the estimate of the second term of the r.h.s. of the equality~\eqref{eq:propagationPtTpsistar}, under the assumption {\Ax}, the BDG inequality yields that, for $q\geq2$,
\begin{align}\label{eq:estimate2}
&\Ex\left[\sup_{t\in[0,T]}\left|\frac{1}{N}\sum_{i=1}^N\int_0^t\exp\left(\int_0^sg(X_r^i)dr\right)
\nabla_xP_{s,T}^g\psi_{\star}(X_s^i)^{\top}\sigma(X_s^i)dW_s^i\right|^q\right]\nonumber\\
&\qquad \leq \frac{C}{N^q}\sum_{i=1}^N\Ex\left[\left(\int_0^T\exp\left(2\int_0^sg(X_r^i)dr\right)
\left|\nabla_xP_{s,T}^g\psi_{\star}(X_s^i)\right|^2\left|\sigma(X_s^i)\right|^2ds\right)^{q/2}\right]\nonumber\\
&\qquad\leq\frac{C}{N^q}\sum_{i=1}^N\Ex\left[\left(\int_0^T\exp\left(2\int_0^sg(X_r^i)dr\right)
\hat{Q}^2_{g}(|X_s^i|)\left|\sigma(X_s^i)\right|^2ds\right)^{q/2}\right]\nonumber\\
&\qquad\leq\frac{C}{N^q}\sum_{i=1}^N\Ex\left[\left(\int_0^T
\hat{Q}^2_{g}(|X_s^i|)\left|\sigma(X_s^i)\right|^2ds\right)^{q/2}\right]\notag\\
&\qquad=\frac{C}{N^{q-1}}\Ex\left[\left(\int_0^T
\hat{Q}^2_{g}(|X_s^1|)\left|\sigma(X_s^1)\right|^2ds\right)^{q/2}\right],
\end{align}
for some constant $C=C_{g,b,\sigma,q,T}>0$ which may be different from line to line, where we used $g\leq0$ in the assumption {\Agl}, and $\hat{Q}_g$ is the polynomial with ${\rm deg}(\hat{Q}_g)={\rm deg}(Q_g)$ given in Lemma~\ref{lem:estimatePtTpsistar}. In addition, by applying the linear growth condition in the assumption~{\Absig} or the H\"older continuity given in the assumption~{\Acir}, there exists a constant $C=C_{g,b,\sigma,T}>0$ such that, $\Px$-a.s.
\begin{align}\label{eq:estimatepoly1}
  \hat{Q}^2_{g}(|X_s^1|)\left|\sigma(X_s^1)\right|^2\leq {\rm P}_1\left(\sup_{t\in[0,T]}|X^1_t|^2\right),\quad s\in[0,T],
\end{align}
where ${\rm P}_1:\R_+\to\R_+$ is a polynomial with ${\rm deg}({\rm P}_1)={\rm deg}(Q_g)+1$. By \eqref{eq:momentqq} under the assumption {\Ax}, it follows that
\begin{align}\label{eq:momentqq11}
  \Ex\left[\sup_{t\in[0,T]}\left|X^1_t\right|^{{\rm deg}({\rm P}_1)q}\right]\leq C\left\{\Ex\left[\left|X^1_0\right|^{{\rm deg}({\rm P}_1)q}\right]+1\right\}<\infty,
\end{align}
for some constant $C=C_{g,b,\sigma,T}>0$. Then, by applying \eqref{eq:estimate2}, \eqref{eq:estimatepoly1} and \eqref{eq:momentqq11}, there exists a constant $C=C_{g,b,\sigma,T,q}>0$ which may be different from line to line such that
\begin{align}\label{eq:estimate200}
&\Ex\left[\sup_{t\in[0,T]}\left|\frac{1}{N}\sum_{i=1}^N\int_0^t\exp\left(\int_0^sg(X_r^i)dr\right)
\nabla_xP_{s,T}^g\psi_{\star}(X_s^i)^{\top}\sigma(X_s^i)dW_s^i\right|^q\right]\nonumber\\
&\qquad\qquad\leq\frac{C}{N^{q-1}}\Ex\left[\left(\int_0^T
\hat{Q}^2_{g}(|X_s^1|)\left|\sigma(X_s^1)\right|^2ds\right)^{q/2}\right]\nonumber\\
&\qquad\qquad
\leq\frac{C}{N^{q-1}}\left\{{\mathbb E}\left[\left|X^1_0\right|^{{\rm deg}({\rm P}_1)q}\right]+1\right\}.
\end{align}
Then, the estimate \eqref{eq:convergenceratepropagation} can be deduced from \eqref{eq:estimate200} and \eqref{eq:initialdifference} jointly. Thus, we complete the proof of the theorem.
\end{proof}

Recall that Example~\ref{exam:one-dimCIR} relates to the one-dimensional RM eqaution with $b(x)=a+bx$, $\sigma(x)=\sigma\sqrt{x}$ and $g(x)=-x$ for $x\in D=(0,\infty)$; Example~\ref{exam:polynomialcff} relates to the one-dimensional RM eqaution with $b(x)\equiv0$, $\sigma(x)=\sigma\sqrt{2}$ and $g(x)=-x^{2q} + \sum_{l=0}^{2q-1}\alpha_lx^l$ given by \eqref{eq:polynomialconfitfcn} for $x\in D=(-\infty,\infty)$; Example~\ref{exam:affineprocess} relates to the $n$-dimensional RM equation with $b(x)=b+ Bx$, $\sigma(x)\equiv\sigma$, and $g(x)=-r(x)$ given by \eqref{eq:fitness-g-2} for $x\in D=\R^n$. It is not difficult to verify the validity of assumptions {\Acir}, {\AD} and {\Agl} with ${\rm deg}(Q_g)=0$ for Example~\ref{exam:one-dimCIR}, and  the validity of assumptions {\Absig}, {\AD} and {\Agl} for Example~\ref{exam:polynomialcff} and Example~\ref{exam:affineprocess}. Therefore, the propagation of chaos with the rate of convergence \eqref{eq:convergenceratepropagation} in Theorem \ref{thm:propagationchaos} under the assumption {\Ax} on the initial data of the particle system holds for these examples.

\end{document}